\newcommand{\sst}{\scriptstyle}
\newcommand{\eqqcolon}{\mathrel{=\mkern-1mu\colon\mskip-6mu}}}{}
\def\blfootnote{\xdef\@thefnmark{}\@footnotetext}
\newcommand{\lt}{\left}
\newcommand{\rt}{\right}
\newcommand{\Knn}{K_{n,n}}
\newcommand{\To}{\rightarrow}
\newcommand{\Real}{\mathbf{R}}
\newcommand{\RPlus}{\Real_{+}}
\newcommand{\parl}[1]{\left(#1\right)}
\newcommand{\set}[1]{\left\{#1\right\}}
\newcommand{\card}[1]{\left\lvert#1\right\rvert}
\def\bigO{\mathop{O}\nolimits}
\def\bigOmega{\mathop{\Omega}\nolimits}
\def\bigTheta{\mathop{\Theta}\nolimits}
\newcommand{\limk}{\lim_{k\To\infty}}
\newcommand{\pwit}{\mathcal{T}}
\newcommand{\alphaenumerate}[1]{\let\restoretheenumi\theenumi
\let\restorelabelenumi\labelenumi
\renewcommand{\theenumi}{\alph{enumi}}
\renewcommand{\labelenumi}{(\theenumi)}
#1
\let\theenumi\restoretheenumi
\let\labelenumi\restorelabelenumi
}
\renewcommand{\theenumi}{(\alph{enumi})}
\renewcommand{\labelenumi}{\theenumi}
\newtheorem{theorem}{\noindent Theorem}
\newtheorem{lemma}{\noindent Lemma}
\newtheorem{corollary}{\noindent Corollary}
\newtheorem{proposition}{\noindent Proposition}
\newtheorem{conjecture}{\noindent Conjecture}
\newtheorem{assertion}{\noindent Assertion}
\newtheorem{assumption}{\noindent Assumption}
\newtheorem{condition}{\noindent Condition}
\newtheorem*{defnn}{\noindent Definition}
\newtheorem{theorem}{\noindent Theorem}[chapter]
\newtheorem{lemma}{\noindent Lemma}[chapter]
\newtheorem{corollary}{\noindent Corollary}[chapter]
\newtheorem{proposition}{\noindent Proposition}[chapter]
\newtheorem{conjecture}{\noindent Conjecture}[chapter]
\newtheorem{assertion}{\noindent Assertion}[chapter]
\newtheorem{assumption}{\noindent Assumption}[chapter]
\newtheorem{condition}{\noindent Condition}[chapter]
\newtheorem*{defnn}{\noindent Definition}
\newtheorem{theorem}{\noindent Theorem}
\newtheorem{lemma}{\noindent Lemma}
\newenvironment{proof}[1][Proof]
               {\par \normalfont %\topsep6\p@\@plus2\p@\relax
                \trivlist
                \item[\hspace{17pt}\itshape #1{.}]\ignorespaces
               }{ \hfill $\Box$ \endtrivlist}
\newcommand{\rd}{\mathrm{d}} %differential d
\newcommand{\eqdist}{\stackrel{\text{\upshape\tiny D}}{=}}  %equality in distribution
\chardef\ii="10
\DeclareMathAlphabet{\bi}{OT1}{ptm}{b}{it}
\newcommand{\allcomment}[1]{\relax}
\renewcommand{\rd}{\mathop{}\!\mathrm{d}}
\newcommand{\minorderd}{{\operatorname{min}}^{(d)}}
\renewcommand{\theenumi}{\alph{enumi}}
\renewcommand{\labelenumi}{(\theenumi)}
\newcommand{\manuallabel}[2]{\def\@currentlabel{#2}\label{#1}}
\begin{document}
\begin{frontmatter}

% "Title of the paper"
\title{Solutions to recursive distributional equations for the mean-field
TSP and related problems}
\runtitle{Recursive distributional equations for the mean-field TSP}

% indicate corresponding author with \corref{}
% \author{\fnms{John} \snm{Smith}\corref{}\ead[label=e1]{smith@foo.com}\thanksref{t1}}
% \thankstext{t1}{Thanks to somebody} 
% \address{line 1\\ line 2\\ printead{e1}}
% \affiliation{Some University}

\author{\fnms{Mustafa} \snm{Khandwawala}\thanksref{t1}\ead[label=e1]{mustafa@ece.iisc.ernet.in}}
%\and
%\author{\fnms{Rajesh} %\snm{Sundaresan}\thanksref{t1}\ead[label=e2]{rajeshs@ece.iisc.ernet.in}}
%\thankstext{t1}{This work was supported by the Department of Science and Technology, Government of India and by a TCS fellowship grant.}
\thankstext{t1}{The author is currently at INRIA, Paris, France.}
%\thankstext{t2}{First supporter of the project}
%\thankstext{t3}{Second supporter of the project}
\runauthor{Khandwawala}

\affiliation{Indian Institute of Science}

\address{Department of Electrical Communication Engineering\\
Indian Institute of Science\\
Bangalore 560012, India\\
\printead{e1}}

%\phantom{E-mail:\ }\printead*{e2}}

%\and
%\author{\fnms{???} \snm{???}\ead[label=e2]{???}}
%\address{\printead{e2}}
%\affiliation{???}

\begin{abstract}
For several combinatorial optimization problems over random structures, the theory of \emph{local weak convergence} from probability and the \emph{cavity method} from statistical physics can be used to deduce a recursive equation for the distribution of a quantity of interest. We show that there is a unique solution to such a \emph{recursive distributional equation} (RDE) when the optimization problem is the traveling salesman problem (TSP) or from a related family of minimum weight \emph{$d$-factor} problems (which includes minimum weight matching) on a complete graph (or complete bipartite graph) with independent and identically distributed edge-weights from the exponential distribution. We analyze the dynamics of the operator induced by the RDE on the space of distributions\if0 (with finite first moment)\fi, and prove that the iterates of the operator, starting from any arbitrary distribution, converges to the fixed point solution, modulo shifts. The existence of a solution to the RDE in such a problem 
helps in proving 
results about the limit of the optimal solution of the combinatorial problem. The convergence of the iterates of the operator is important in proving results about the validity of belief propagation for iteratively finding the optimal solution.

\end{abstract}

\begin{keyword}[class=AMS]
\kwd[Primary ]{68Q87} %68Q87 Probability in computer science (algorithm analysis, random structures, phase transitions, etc.) 
\kwd[; secondary ]{60C05} %60C05 Combinatorial probability
\kwd{60E05} %60E05 Distributions: general theory
%\kwd{82B44} %82B44 Disordered systems (random Ising models, random Schrödinger operators, etc.)
%\kwd[; secondary ]{68W40}
\end{keyword}

\begin{keyword}
\kwd{belief propagation}
\kwd{combinatorial optimization}
\kwd{fixed point}
\kwd{matching}
\kwd{mean-field}
\kwd{probability distribution}
\kwd{recursive distributional equation}
\kwd{traveling salesman problem}
%\kwd{}
\end{keyword}

\end{frontmatter}

%\title{Existence and uniqueness of solutions to RDEs for problems related to matching and TSP}

\section{Introduction} \label{sec:introduction}
We address here a class of fixed point equations over the space of distributions over $\Real$, called \emph{recursive distributional equations} RDEs, that correspond to optimization problems over complete graphs (or complete bipartite graphs) with independent and identically distributed (i.i.d.) edge-weights. A fixed point (which is a distribution over $\Real$) of such an equation contains information about the cost and structure of the solution to the optimization problem, asymptotically as the number of vertices grow to infinity. Furthermore, the dynamics of the operator associated with the fixed point equation corresponds to the dynamics of iterative algorithms like belief propagation (BP) for solving the optimization problem. We prove that each RDE in this class has a unique fixed point, and we also characterize the dynamics of the corresponding operator.

A $d$-factor of a graph is a $d$-regular subgraph of the graph with the same vertex set as the graph. Clearly a 1-factor is a perfect matching. We are interested in the $n\To\infty$ asymptotics of the minimum weight $d$-factor problem on a complete $n\times n$ bipartite graph with i.i.d.\ exponentially distributed edge-weights. Frieze \cite{Fri2004} proved that the cost of the traveling salesman problem (TSP) is asymptotically the same as the cost of the minimum weight 2-factor problem when we have i.i.d.\ weights on the edges. Since we are only interested in asymptotics, both the minimum weight perfect matching and TSP are included in our problems of interest.

\begin{figure}[t]
\def\svgwidth{120px}
\centering
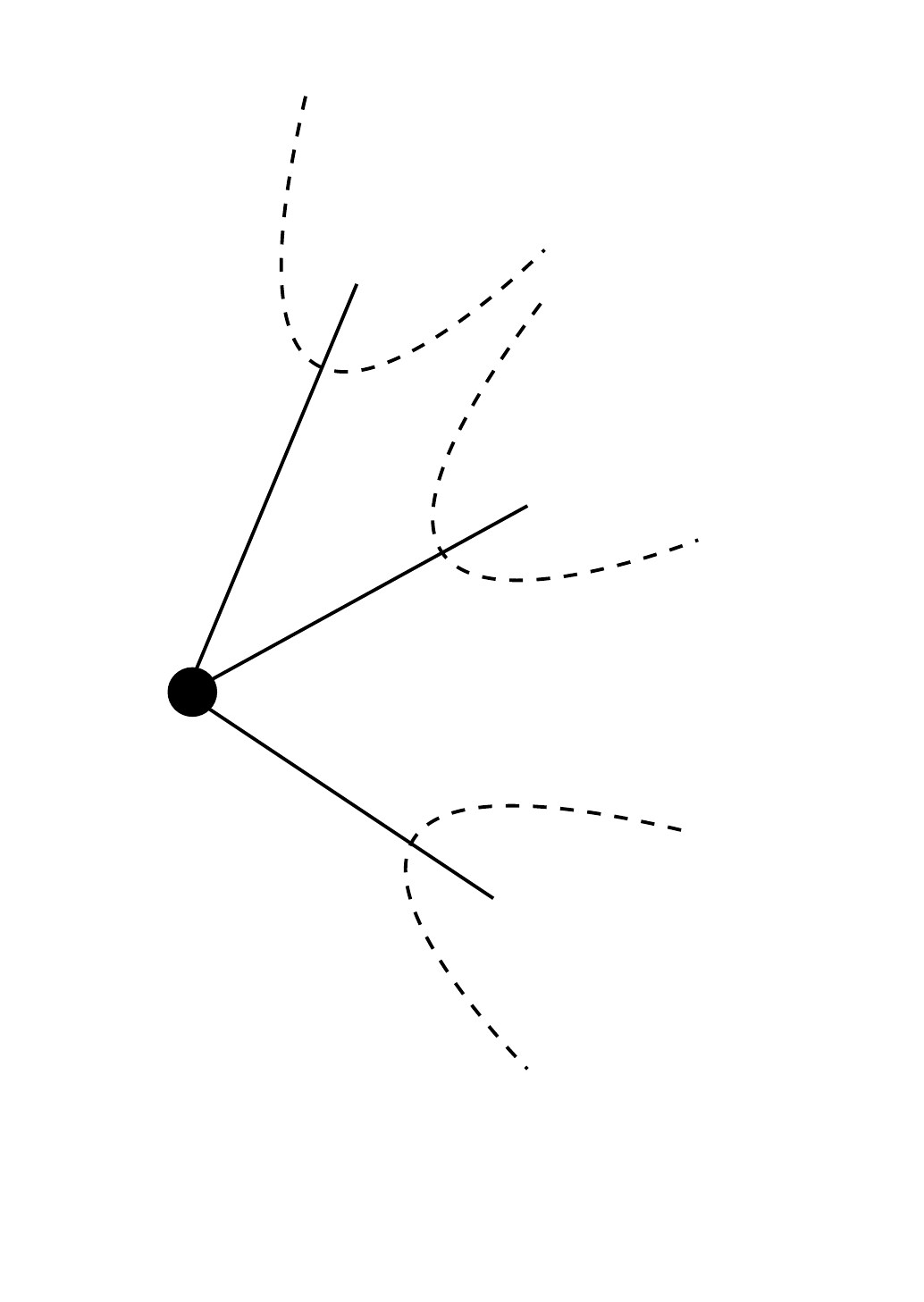
\caption{\label{fig:subtrees}A tree $\pwit$ with the subtrees $\pwit^j$ at node $j$.}
\end{figure}

Let us see with an example the type of recursive distributional equation we get. We use the simplest problem in our ensemble: minimum weight perfect matching problem. Suppose we are interested in solving the problem over a finite tree $\pwit$.
Let $\phi$ be the root of the tree, number its children 1, 2, 3, etc.\ arbitrarily, and call the subtrees rooted at these children $\pwit_1,\pwit_2,\pwit_3$ and so on, as in figure \ref{fig:subtrees}. Denote by $\pwit\setminus\phi$ the graph that remains after removing the root vertex $\phi$ from $\pwit$, that is, the union of the subtrees. Write $C(G)$ for the cost of the minimum weight matching on a graph $G$. It is easy to get the following:
\begin{equation*}
 C(\pwit\setminus\phi)=\sum_{i\sim\phi}C(\pwit_i),
\end{equation*}
and
\begin{equation*}
 C(\pwit)=\min_{i\sim\phi}\set{w(\phi,i)+C(\pwit_i\setminus i)+\sum_{\substack{j\sim\phi\\ j\neq i}}C(\pwit_j)},
\end{equation*}
where $w(\phi,i)$ is the weight of edge $\set{\phi,i}$; in the second equation we optimize over the choice of neighbor $i$ matched to $\phi$. Taking the difference, we have
\begin{equation}\label{eq:costrecursion}
 C(\pwit)-C(\pwit\setminus\phi)=\min_{i\sim\phi}\set{w(\phi,i)-\lt(C(\pwit_i)-C(\pwit_i\setminus i)\rt)}.
\end{equation}

\begin{figure}[tb]
\def\svgwidth{120px}
\centering
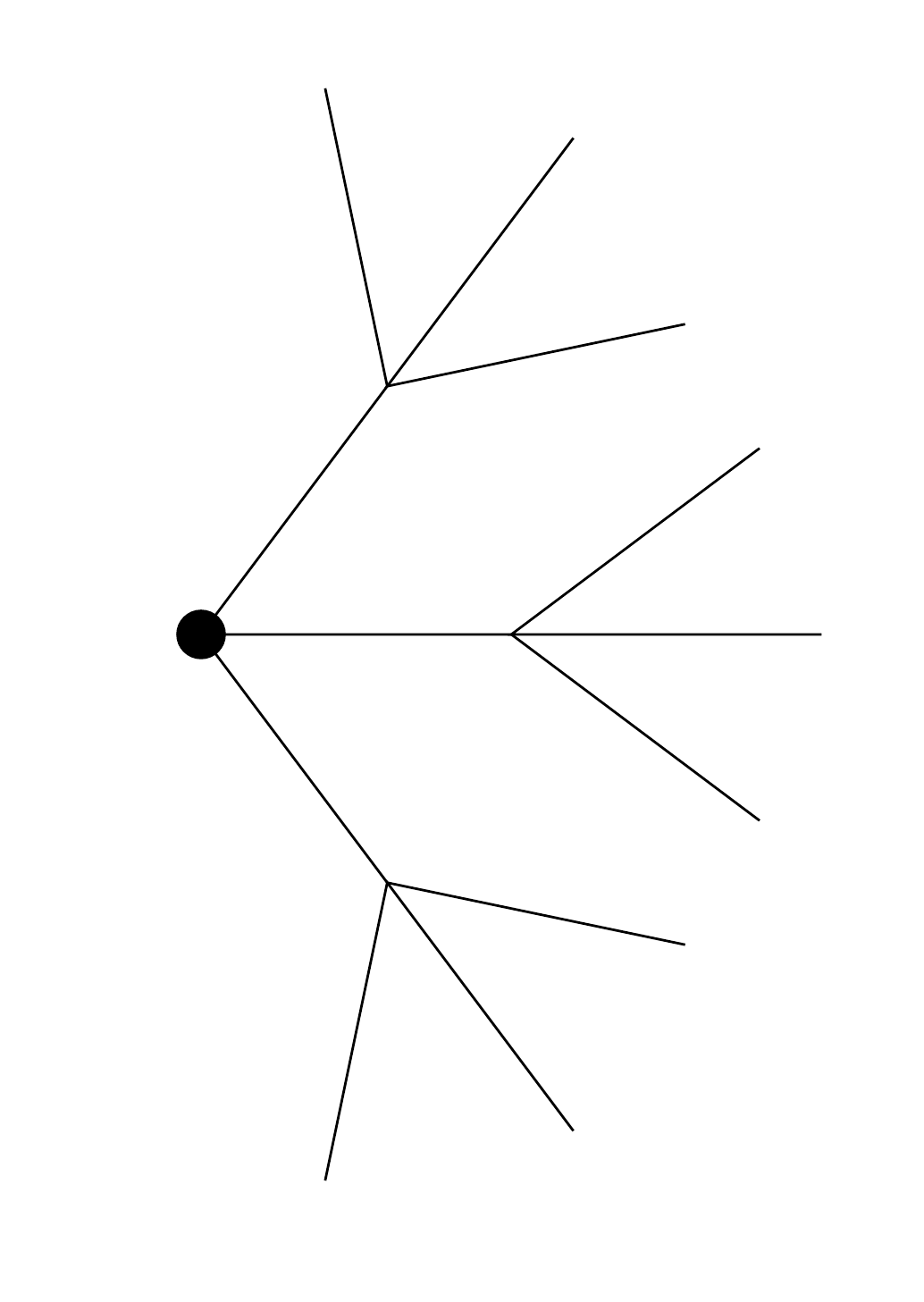
\caption{\label{fig:pwit}PWIT $\pwit$ up to depth 2, with only the first three children of each vertex shown.}
\end{figure}

Aldous \cite{Ald1992,Ald2001} proved that the sequence of complete bipartite graphs $\Knn$ with i.i.d.\ exponentially distributed edge-weights converges weakly to what is called the \emph{Poisson weighted infinite tree} PWIT. Refer to \cite[Section~2]{AldSte2004} for the precise definition of this notion of convergence called \emph{local weak convergence}. For our purpose, it suffices to know that this limit object, PWIT, has a recursive structure as follows. The root $\phi$ has countably infinite children (labeled $1,2,\ldots$) and the corresponding edge weights $\xi^\phi_1,\xi^\phi_2,\ldots$ are points of a rate 1 Poisson process on $[0,\infty)$. This structure is repeated recursively at each new vertex with an independent Poisson process for the weights of the edges connecting it with its children. See figure \ref{fig:pwit}.

This structure of the PWIT allows us to express (\ref{eq:costrecursion}) -- even though it is not defined for an infinite tree -- as an equation among random variables as follows.
\begin{equation*} %\label{eq:rdematching}
 X \eqdist \min_j (\xi_j-X_j),
\end{equation*}
where $\set{\xi_j,j\ge 1}$ are points of a Poisson process of rate 1 on $\RPlus$, $\set{X_j,j\ge 1}$ are i.i.d.\ random variables, independent of the Poisson process, and having the same distribution as $X$.
Such an equation is called a recursive distributional equation (RDE) (note that it is an equation in distributions). Aldous \cite{Ald2001} showed using elementary computations that the solution to the matching RDE is the logistic distribution
\[
 P(X\le x)=\frac{1}{1+e^{-x}}.
\]
He then used the solution to the RDE to prove that the optimal cost of the matching problem on $\Knn$ converges to $\pi^2/6$. Aldous's method, called the \emph{objective method} is outlined in \cite[Section~7.5]{AldBan2005}, and it involves RDE as a key tool.

RDEs arise in a number of other contexts; see \cite{AldBan2005} for a survey. Such equations also appear from the cavity method in statistical physics \cite{MezPar1986}.

% For minimum matching weight matching \cite{Ald2001,LinWas2004,NaiPraSha2005} and for minimum weight 2-factor (or TSP) \cite{Was2010}, the expectation of the optimal cost converges to a constant. These have been shown using a variety of methods. But, in general, when applying the \emph{objective method} -- based on \emph{local weak convergence} of random structures \cite{Ald2001,AldSte2004}, %-- to seek the limit of the optimal cost in the $d$-factor problems 
%as per a programme outlined in \cite[Section~7.5]{AldBan2005} --
%we get a \emph{recursive distributional equation} (RDE) for the distribution of a quantity of interest. This equation is obtained by exploiting the symmetry of the typically infinite random structure obtained as the weak limit of the finite random structures. Such equations also appear from the cavity method in statistical physics \cite{MezPar1986}. 
For the minimum weight $d$-factor problem, given an integer $d\ge 1$, the equation takes the following form:
\begin{equation} \label{eq:rdegeneral}
 X \eqdist \minorderd_j (\xi_j-X_j),
\end{equation}
where $\minorderd$ denotes the $d$-th minimum term, $\set{\xi_j,j\ge 1}$ are points of a Poisson process of rate 1 on $\RPlus$, $\set{X_j,j\ge 1}$ are i.i.d.\ random variables, independent of the Poisson process, and having the same distribution as $X$. %See Aldous and Steele \cite[Sec.~5.4]{AldSte2004} for a heuristic deduction of the equation for minimum weight matching ($d=1$).

Let the complementary cdf of $X_j$ be $F$. It is an easy exercise to check that the complementary cdf $TF$ of $\minorderd_j (\xi_j-X_j)$ is given by
\begin{equation}\label{eq:Tmap}
TF(x)=\exp\left(-\int_{-x}^\infty F(t)\rd t\right)\left(\sum_{i=0}^{d-1}\frac{\parl{\int_{-x}^\infty F(t)\rd t}^i}{i!}\right),\; x\in\Real.
\end{equation}
Fixed points of the map $T$ are solutions to the RDE (\ref{eq:rdegeneral}).

Parisi and W\"{a}stlund \cite{ParWas2012} proved that the TSP RDE ($d=2$) has a unique solution. W\"{a}stlund \cite{Was2010} showed the limit of the optimal cost of the TSP without relying on an RDE, nevertheless the proof for the solution to the RDE established that the limit constant in \cite{Was2010} is the same as that predicted by replica and cavity methods \cite{MezPar1985,KraMez1989}. Their method can, in principle, be applied to the case of $d>2$ also.

As shown in \cite{SalSha2009,KhaSun2012} the dynamics under the map $T$ is closely related to the updates of belief propagation. Consequently, it is of interest to analyze this dynamics in addition to exploring the fixed points. 
Even when the RDE for a problem on complete graphs cannot be solved explicitly, asserting the existence of a solution directs the construction of an optimal solution on the local weak limit of the graph sequence -- the Poisson weighted infinite tree (PWIT). This leads to a lower bound on the limit optimal cost. Also we can use the uniqueness of the RDE solution, and the convergence to the solution under the iterates of the map $T$ to this solution to prove that belief propagation generates an asymptotically optimal solution. However, to complete these proofs we need one more property related to the RDE called endogeny \cite[Section~2.4]{AldBan2005}, which is often difficult to establish. See \cite{SalSha2009} and \cite{KhaSun2012} for implementation of Aldous's program for the problems of matching and edge cover.

Here we prove that for any integer $d\ge 1$, the RDE (\ref{eq:rdegeneral}) has a unique solution, or equivalently, that the map $T$ has a unique fixed point. In doing so, we give an essential characterization of the domain of attraction of the fixed point. It remains to find whether the associated recursive tree process (RTP) is endogenous for $d\ge 2$. (The RTP for the matching case, $d=1,$ is endogenous \cite{Ban2011}.)

Write $\mathcal{D}$ for the space of complementary cdfs $F$ of proper random variables that satisfy $\int_0^\infty F(t)\rd t<\infty$. The following theorem formalizes our main result.
\begin{theorem}\label{thm:uniquefixedpt}
For any $d\ge 1$, the map $T$ in (\ref{eq:Tmap}) has a unique fixed point $F_d$.

For every distribution $F$ in $\mathcal{D}$, there exists a real $\gamma$ such that for all $x\in\Real$,
\begin{equation}
\begin{split}
\limk T^{2k} F(x) &= F_d(x-\gamma),\\
\limk T^{2k+1} F(x) &= F_d(x+\gamma).
\end{split}
\end{equation}
\end{theorem}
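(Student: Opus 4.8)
The plan is to build the proof on two structural properties of $T$. Writing $G_F(x)=\int_{-x}^\infty F(t)\,\rd t$, formula (\ref{eq:Tmap}) reads $TF(x)=\psi\bigl(G_F(x)\bigr)$ with $\psi(\lambda)=e^{-\lambda}\sum_{i<d}\lambda^i/i!$ strictly decreasing in $\lambda\ge 0$, so: (i) $T$ is \emph{order-reversing} for the pointwise order ($F\le F'\Rightarrow G_F\le G_{F'}\Rightarrow TF\ge TF'$), hence $T^2$ is \emph{order-preserving}; and (ii) $T$ is \emph{translation-reversing}: with $S_aF(x)=F(x-a)$ one has $G_{S_aF}(x)=G_F(x+a)$, whence $T(S_aF)=S_{-a}(TF)$, so $T^2$ commutes with translations and, if $TF_d=F_d$, all translates of $F_d$ are period-two points of $T$. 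I would first dispose of routine points: $T$ maps $\mathcal D$ into itself; $T$ is continuous for weak convergence (equivalently, by P\'olya's theorem, locally uniform convergence on the monotone functions $\mathcal D$); and the \emph{a priori tail estimates} — since $F(t)\to 1$ as $t\to-\infty$ and $\int_0^\infty F<\infty$, one gets $G_F(x)=x+O(1)$ at $+\infty$ and $G_F(x)\to 0$ at $-\infty$, and feeding this back a bounded number of times shows that for $k\ge k_0$ the iterate $T^kF$ lies in a class $\mathcal D_0$ with sharp asymptotics $T^kF(x)\asymp x^{d-1}e^{-x}$ at $+\infty$ and $1-T^kF(x)$ exponentially small at $-\infty$, the leading constants controlled tightly enough that $\mathcal D_0$ is \emph{compact modulo translation}. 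A companion observation: at a $T$-fixed point $G_{F_d}'(x)=F_d(-x)=\psi(G_{F_d}(-x))$, so $\tfrac{\rd}{\rd x}\bigl[\Psi(G_{F_d}(x))+\Psi(G_{F_d}(-x))\bigr]=0$ with $\Psi(\lambda)=\int_0^\lambda\psi$; since $\Psi(0)=0$ and $\Psi(\infty)=\int_0^\infty\psi=d$, this yields the conserved identity $\Psi(G_{F_d}(x))+\Psi(G_{F_d}(-x))\equiv d$, which pins the leading tail constants of $F_d$ and, through $S_aF_d$, their shift-covariance.

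The core is \emph{uniqueness of period-two points modulo translation}: if $F,H\in\mathcal D_0$ satisfy $T^2F=F$ and $T^2H=H$, then $H=S_\theta F$. I would argue by a maximal-translate / touching principle. Set $\alpha=\sup\{a:S_aF\le H\}$ and $\beta=\inf\{b:H\le S_bF\}$; comparability of tails makes both finite, $S_\alpha F\le H\le S_\beta F$, $\alpha\le\beta$. Two ingredients show there is no room. \emph{Strictness propagation}: if $S_\alpha F\le H$ but $S_\alpha F\not\equiv H$, the two differ on a neighbourhood of $+\infty$, so $G_{S_\alpha F}<G_H$ everywhere, hence $T(S_\alpha F)>TH$ everywhere; applying the same step once more (both are period-two points) gives $S_\alpha F=T^2(S_\alpha F)<T^2H=H$ everywhere — so the inequality is either an identity or strict at every point. \emph{Shift perturbation}: if $S_\alpha F<H$ strictly everywhere while the leading tail constants of $S_\alpha F$ and $H$ do not already coincide at \emph{both} $\pm\infty$, then (checking $+\infty$, $-\infty$ and the compact middle separately, using monotonicity in $a$ of these constants) one can slightly enlarge $\alpha$ keeping $S_aF\le H$, contradicting maximality. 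Since the $+\infty$-constant of $S_aF$ increases and the $-\infty$-constant decreases in $a$, while the conserved identity ties the two constants together by a shift-covariant relation, these force $\alpha=\beta$, hence $H=S_\alpha F$. Specialising to $T$-fixed points: all $T$-fixed points are translates of one another, and by (ii) only one is a genuine fixed point, so $T$ has \emph{at most} one fixed point $F_d$.

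For \emph{existence} I would run a monotone iteration: choose $F_0\in\mathcal D$ with a heavy enough tail that $T^2F_0\le F_0$ (an explicit sub-exponential choice works for each $d$); then $T^{2k}F_0$ decreases, is bounded below by $0$, and converges pointwise to some $H\in\mathcal D_0$ with $T^2H=H$; applying $T$ shows $TH$ is again a period-two point, so by the uniqueness just proved $TH=S_aH$ for some $a$, and then $F_d:=S_{a/2}H$ satisfies $TF_d=S_{-a/2}(TH)=S_{-a/2}S_aH=F_d$. (Alternatively, the conserved identity reduces the fixed-point equation to an autonomous planar ODE on the level set $\Psi(p)+\Psi(q)=d$, whose unique admissible trajectory exhibits $F_d$ directly.) For \emph{convergence}, take any $F\in\mathcal D$; for $k\ge k_0$, $T^kF\in\mathcal D_0$ is comparable to $F_d$, so $S_\alpha F_d\le T^{k_0}F\le S_\beta F_d$ for some $\alpha\le\beta$, and since $T^2$ is order-preserving and fixes $S_\alpha F_d,S_\beta F_d$, the orbit stays in the order interval $[S_\alpha F_d,S_\beta F_d]$; moreover $\alpha_k:=\sup\{a:S_aF_d\le T^{2k}F\}$ is nondecreasing and $\beta_k:=\inf\{b:T^{2k}F\le S_bF_d\}$ is nonincreasing in $k$. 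Every subsequential (weak, hence locally uniform) limit of $T^{2k}F$ is a period-two point in this interval, hence $S_\gamma F_d$ for a unique $\gamma$ by the uniqueness theorem; continuity of the functionals $\alpha(\cdot),\beta(\cdot)$ under locally uniform convergence (again using the uniform tail control in $\mathcal D_0$) forces $\gamma=\lim_k\alpha_k=\lim_k\beta_k$, independent of the subsequence. Hence $T^{2k}F\to F_d(\,\cdot-\gamma)$, and then $T^{2k+1}F=T(T^{2k}F)\to T(S_\gamma F_d)=S_{-\gamma}F_d=F_d(\,\cdot+\gamma)$ by continuity and (ii) — exactly the stated dichotomy.

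The step I expect to be the real obstacle is the tail analysis underpinning everything: establishing the precise first-order asymptotics of iterates and of period-two points at $\pm\infty$, proving genuine compactness of the attracting class $\mathcal D_0$ modulo translation, and — most delicately — extracting from the conserved identity the shift-covariant relation between the two one-sided leading constants needed to close the ``shift perturbation'' step and rule out any residual gap $\alpha<\beta$. Once those tail facts are in hand, the order-theoretic bookkeeping (strictness propagation, monotonicity of $\alpha_k,\beta_k$, the squeeze) is comparatively mechanical.
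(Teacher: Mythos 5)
Your plan (order-preserving $T^2$, translation reversal, maximal-translate/touching argument, monotone iteration for existence) is a genuinely different route from the paper, which instead tracks the location-dependent shift $\widehat{F}$ defined by $F(x)=TF(x-\widehat{F}(x))$ and proves a strict contraction of its oscillation (Lemma~\ref{lem:Tmapstrict}) together with uniform tail and shift control (Lemmas~\ref{lem:alltailuniformbd} and \ref{lem:hatuniformcontrol}); your conserved quantity $\Psi(G_{F_d}(x))+\Psi(G_{F_d}(-x))\equiv d$ is a nice observation the paper does not use. However, the decisive analytic content is missing at exactly the points you defer. The ``shift perturbation'' step --- the heart of your uniqueness claim --- presupposes that arbitrary period-two points have exact first-order tail asymptotics with well-defined leading constants, monotone in the shift, and that the conservation law yields a ``shift-covariant relation'' between the two one-sided constants. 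None of this is established: the paper's tail information is only two-sided $\Theta$-bounds (with constants that may depend on the iterate), and your conserved identity as derived holds at genuine fixed points; for a period-two point $H$ it only gives the paired identity $\Psi(G_H(x))+\Psi(G_{TH}(-x))\equiv d$, which does not by itself tie the two tails of a single $H$. Closing this is precisely the Taylor-expansion work at $\pm\infty$ in the proof of Lemma~\ref{lem:Tmapstrict}, so as written the core step is a restatement of the difficulty rather than a proof. (A smaller fixable slip in the same step: $S_\alpha F\le H$, $S_\alpha F\not\equiv H$ need not make them differ near $+\infty$; you only get $G_{S_\alpha F}<G_H$ on a half-line, and strictness everywhere comes after one more application of $T^2$.)

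Two further gaps are independent of the tail analysis. First, in the convergence argument you assert that every subsequential (locally uniform) limit of $T^{2k}F$ is a period-two point; this does not follow from the orbit lying in an order interval with $\alpha_k\uparrow$, $\beta_k\downarrow$ --- invariance of the limit set under $T^2$ does not give periodicity of its points. You need a strict Lyapunov-type statement for \emph{arbitrary} $H$ in the attracting class (e.g.\ that the sandwich width $\beta(H)-\alpha(H)$ relative to translates of $F_d$ strictly decreases under $T^2$ unless it is zero), which is again the missing strictness lemma, not the uniqueness of period-two points that you proved; moreover continuity of $\alpha(\cdot),\beta(\cdot)$ under locally uniform convergence needs uniform control of the shifts near $\pm\infty$ (the role of Lemma~\ref{lem:hatuniformcontrol}), since these functionals are sensitive to tails that compact convergence does not see. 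Second, the existence step needs both an explicit $F_0$ with $T^2F_0\le F_0$ (only asserted; note the requirement is self-referential, since enlarging $F_0$ enlarges $T^2F_0$) and a proof that the decreasing limit is a proper distribution in $\mathcal{D}$ --- a pointwise decreasing limit can be defective, with mass escaping to $-\infty$, unless one already has uniform two-sided tail bounds of the type in Lemma~\ref{lem:alltailuniformbd}. So the skeleton is coherent and attractive, but the theorem is not yet proved: the uniform tail estimates, the strict-inequality-at-infinity argument, and the ``limits are periodic'' step all have to be supplied, and they constitute essentially the whole of the paper's Lemmas~\ref{lem:hatbounded}--\ref{lem:hatuniformcontrol}.
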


In the next section, we will prove this result using a series of Lemmas. The following simple observation formalized in Lemma~\ref{lem:Tmapboundshrink} is the basis of the approach. If we have real numbers $m\le M$ such that $TF(x-m)\le F(x)\le TF(x-M)$ for all $x\in\Real$ and some \emph{nice} function $F$, then 
$T^3F(x-m)\le T^2F(x)\le T^3F(x-M)$ for all $x\in\Real$. Thus working in increments of two, we have control over the shift between the functions $T^{2k}F$ and $T^{2k+1}F$. Lemma~\ref{lem:Tmapstrict} makes the inequalities strict. To complete the proof of Theorem~\ref{thm:uniquefixedpt}, we will replace $m$ and $M$ with a sequence of pairs $m_k$ and $M_k$, where $m_k \le M_k$, such that both approach a constant $\gamma$ (which depends on $F$) as $k\To\infty$. For convenience, we will work with transforms of the complementary cdfs defined in terms of shifts.

The method we follow here generalizes the method of Salez and Shah \cite{SalSha2009} for the analysis of the iterates generated by the matching RDE (corresponding to $d=1$). Their work uses the known closed-form solution (the logistic distribution) to the matching RDE; we do not have that advantage here.

\section{Proof of Theorem~\ref{thm:uniquefixedpt}} \label{sec:cvgiter}
First we do some bookkeeping. Let $\mathcal{C}_p$ denote the space of differentiable nondecreasing functions from $\Real$ to $\RPlus$ that vanish at $-\infty$ and diverge to $\infty$ at $\infty$. Define a map $I:\mathcal{D}\To\mathcal{C}_p$
\begin{equation*}%\label{eq:Imap}
IF(x)=\int_{-x}^\infty F(t)\rd t,\;x\in\Real.
\end{equation*}
Define $P:\RPlus\To(0,1]$ $P(y)=e^{-y}\left(\sum_{i=0}^{d-1}\frac{y^i}{i!}\right)$. 
We then have
\begin{equation}\label{eq:TFIF}
TF=e^{-IF}\left(\sum_{i=0}^{d-1}\frac{(IF)^i}{i!}\right)=P(IF).
\end{equation}
Observe that $IF^\prime(x)=F(-x)$. We can write the derivative of $TF$ as
\begin{equation}\label{eq:derivative}
TF^\prime(x)=-e^{-IF(x)}\frac{(IF(x))^{d-1}}{(d-1)!}F(-x).
\end{equation}

\begin{lemma} \label{lem:Freg1}
Let $\mathcal{D}_1\subset\mathcal{D}$ contain all functions in $D$ that are strictly decreasing, $1$-Lipschitz continuous, and differentiable. If $F\in\mathcal{D}$ then $T^k F\in\mathcal{D}_1$ for all $k\ge 2$.
\end{lemma}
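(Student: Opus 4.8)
The plan is to establish the required regularity in two stages: first show $TF$ is differentiable with a continuous derivative for any $F\in\mathcal{D}$, then show that after one more application of $T$ we gain $1$-Lipschitz continuity and strict monotonicity, which together with differentiability gives membership in $\mathcal{D}_1$ for all $k\ge 2$. Throughout I will use the factorization $TF=P(IF)$ from \eqref{eq:TFIF} and the explicit derivative formula \eqref{eq:derivative}. The key elementary facts about $P$ are that $P:\RPlus\to(0,1]$ is continuous, strictly decreasing (its derivative is $-e^{-y}y^{d-1}/(d-1)!$, which is strictly negative for $y>0$), with $P(0)=1$ and $P(y)\to 0$ as $y\to\infty$; and about $I$, that $IF$ is nonnegative, nondecreasing, continuous, vanishes at $-\infty$, diverges at $+\infty$, and is differentiable with $IF'(x)=F(-x)\in[0,1]$.

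First I would verify $TF\in\mathcal{D}$ with $TF$ differentiable. Composition of the continuous strictly decreasing $P$ with the continuous nondecreasing $IF$ shows $TF$ is nonincreasing and continuous, with $TF(-\infty)=P(0)=1$ and $TF(+\infty)=0$, so $TF$ is a proper complementary cdf. Differentiability is immediate from \eqref{eq:derivative} since $IF$ is differentiable and $F$ is at least defined pointwise; here one should note $F$ need only be a complementary cdf (hence Borel, monotone, hence continuous except at countably many points), but the formula \eqref{eq:derivative} for $TF'$ only requires evaluating $F$ and $IF$, both of which are fine — I'd double-check the differentiation step is legitimate, i.e. that $\frac{d}{dx}IF(x)=F(-x)$ holds everywhere, which follows from the fundamental theorem of calculus at continuity points of $F$ and can be extended; this is the one place needing a little care. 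The integrability $\int_0^\infty TF(t)\rd t<\infty$ follows because $TF(t)=P(IF(t))$ and for $t$ large $IF(t)\to\infty$, so $P(IF(t))$ decays at least like $e^{-IF(t)}$, and $IF(t)$ grows at least linearly (indeed $IF(t)\ge \int_{-t}^{0}F\geq t\,F(0)$ when $F(0)>0$, with an easy modification if $F(0)=0$ using that $F$ is positive somewhere); so $TF\in\mathcal{D}$.

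Next, applying $T$ once more: to show $T^2F$ is $1$-Lipschitz I bound $|(T^2F)'|$. Writing $G=TF\in\mathcal{D}$, formula \eqref{eq:derivative} gives $(TG)'(x)=-e^{-IG(x)}\frac{(IG(x))^{d-1}}{(d-1)!}G(-x)$. Since $G(-x)\le 1$ and $e^{-y}y^{d-1}/(d-1)!=-P'(y)\le 1$ for all $y\ge 0$ (this is the elementary bound $\sup_{y\ge0}e^{-y}y^{d-1}/(d-1)!$; for $d=1$ it is exactly $1$ at $y=0$, for $d\ge 2$ the max is at $y=d-1$ and equals $(d-1)^{d-1}e^{-(d-1)}/(d-1)!<1$), we get $|(TG)'(x)|\le 1$, hence $T^2F$ is $1$-Lipschitz. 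Strict monotonicity of $T^2F$ follows from \eqref{eq:derivative}: $(TG)'(x)<0$ unless $G(-x)=0$ or $IG(x)=0$; but $G=TF$ is a proper complementary cdf in $\mathcal{D}$ so $G(-x)>0$ for all finite $x$ (as $G(y)=P(IG_{\text{inner}})>0$ pointwise), and $IG(x)=\int_{-x}^\infty G>0$ for all $x$ since $G$ is positive everywhere; so $(TG)'<0$ strictly, giving strict decrease. Combining, $T^2F\in\mathcal{D}_1$. For $k>2$, note $T^{k}F=T^{k-2}(T^2F)$ and $T^2F\in\mathcal{D}_1\subset\mathcal{D}$, so applying the argument to $G=T^{k-1}F\in\mathcal{D}$ (which holds since each further application of $T$ keeps us in $\mathcal{D}$ by the first stage) shows $T^kF=T(T^{k-1}F)\in\mathcal{D}_1$.

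The main obstacle is the uniform bound $e^{-y}y^{d-1}/(d-1)!\le 1$ on $\RPlus$ — trivial but essential, since it is exactly what makes $T$ contract Lipschitz constants to $1$ after one step — together with being scrupulous that the derivative formula \eqref{eq:derivative} is valid pointwise for a general $F\in\mathcal{D}$ (not just nice $F$), so that "differentiable" in the conclusion is genuinely justified rather than merely "differentiable a.e." The strict-positivity arguments ($G(-x)>0$, $IG(x)>0$ everywhere) are where one uses that $T$ lands in proper, everywhere-positive complementary cdfs, which should be recorded explicitly.
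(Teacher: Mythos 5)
Your proposal is correct and follows essentially the same route as the paper: verify $TF\in\mathcal{D}$ via the linear growth of $IF$ and the resulting exponential tail of $P(IF)$, get the Lipschitz property by bounding the derivative \eqref{eq:derivative}, and deduce strict decrease of $T^2F$ from the strict positivity of $TF$. The only (cosmetic) difference is that you use the uniform bound $-P'(y)\le 1$ and hence claim $1$-Lipschitz continuity only from $T^2F$ on, whereas the paper notes $\card{TF^\prime(x)}\le TF(x)\le 1$ (since $-P'$ is the last summand of $P$), which gives the slightly stronger estimate \eqref{eq:Tderivativebd} already for $TF$ and is reused in later lemmas.
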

\begin{proof}
Since $F$ is nonincreasing, and not identically 0, we can find some $x_0$ and some $\alpha>0$ such that $F(x)\ge \alpha$ for all $x<x_0$. Then
\begin{equation*} %\label{eq:earlyorder}
 IF(x)=\bigOmega(\alpha x)\quad\text{as }x\To\infty,
\end{equation*}
and 
\begin{equation*}
 TF(x)=\bigO\parl{e^{-\alpha x}x^{d-1}}\quad\text{as }x\To\infty,
\end{equation*}
which is integrable on $[0,\infty)$. This implies that $TF\in\mathcal{D}$.

Comparing (\ref{eq:derivative}) and (\ref{eq:TFIF}), we get
\begin{equation} \label{eq:Tderivativebd}
\card{TF^\prime(x)}\le TF(x)\le 1.
\end{equation}
This shows that $TF$ is $1$-Lipschitz. Also, $TF>0$, and so $T^2F$ is strictly decreasing.
\end{proof}

For $F\in\mathcal{D}_1$, define a transform $\widehat{F}$ such that
\begin{equation*}
F(x)=TF\parl{x-\widehat{F}(x)},\;x\in\Real.
\end{equation*}
$\widehat{F}$ denotes the location-dependent shift in $F$ on applying the $T$ map. Since the functions $F$ and $TF$ are monotone and continuous, $\widehat{F}$ is also continuous. Observe that
\begin{equation*}
a<\widehat{F}(x)<b\text{ if and only if }TF(x-a)<F(x)<TF(x-b).
\end{equation*}

\begin{lemma} \label{lem:Tmapboundshrink}
Suppose $F\in\mathcal{D}_1$, and there exist real numbers $m$ and $M$ such that $m\le\widehat{F}(x)\le M$ for all $x\in\Real$. Then $-M\le \widehat{TF}(x)\le -m$ for all $x\in\Real$.
\end{lemma}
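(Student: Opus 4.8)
The plan is to read everything through the factorization $TF = P(IF)$ from \eqref{eq:TFIF} and exploit two structural properties of $T$ that fall out of it. First, $I$ is monotone for the pointwise order on functions (larger integrand, larger integral) while $P$ is strictly decreasing on $\RPlus$, so $T$ is \emph{order-reversing}: if $G_1 \le G_2$ pointwise (with both in $\mathcal{D}$, so the integrals are finite) then $TG_1 \ge TG_2$ pointwise. Second, $T$ intertwines translations: for any $a$, $T\bigl(G(\cdot-a)\bigr) = (TG)(\cdot+a)$, since substituting in $I\bigl(G(\cdot-a)\bigr)(x) = \int_{-x}^\infty G(t-a)\rd t$ gives $(IG)(x+a)$, hence $P$ of it gives $(TG)(x+a)$. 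A preliminary remark is that $\widehat{TF}$ is even defined, i.e.\ $TF\in\mathcal{D}_1$: a strictly decreasing proper complementary cdf is everywhere positive, so \eqref{eq:derivative} gives $TF'<0$ everywhere and \eqref{eq:Tderivativebd} gives the $1$-Lipschitz bound, while $TF\in\mathcal{D}$ is established in the proof of Lemma~\ref{lem:Freg1}; one also notes $IF(x)\to 0$ as $x\to-\infty$ and $IF(x)\to\infty$ as $x\to+\infty$, so $F$, $TF$ and $T^2F$ are all continuous strictly decreasing bijections of $\Real$ onto $(0,1)$, which makes the location-dependent shifts meaningful.

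With this in hand the argument is short. The hypothesis $m\le\widehat{F}(x)\le M$, together with the defining relation $F(x)=TF\bigl(x-\widehat{F}(x)\bigr)$ and the monotonicity of $TF$, is exactly the sandwich $TF(x-m)\le F(x)\le TF(x-M)$ for all $x$ (the non-strict form of the equivalence already recorded above). Now apply $T$ to all three of these functions; order-reversal turns the sandwich into $T\bigl(TF(\cdot-m)\bigr) \ge TF \ge T\bigl(TF(\cdot-M)\bigr)$, and the translation identity with $G=TF$ rewrites the two outer terms as $(T^2F)(\cdot+m)$ and $(T^2F)(\cdot+M)$, so that $(T^2F)(x+M)\le TF(x)\le (T^2F)(x+m)$ for every $x$. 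Finally compare this with the defining relation for the shift of $TF$, namely $TF(x)=(T^2F)\bigl(x-\widehat{TF}(x)\bigr)$: since $T^2F$ is strictly decreasing, $(T^2F)(x+M)\le(T^2F)\bigl(x-\widehat{TF}(x)\bigr)$ forces $x+M\ge x-\widehat{TF}(x)$, i.e.\ $\widehat{TF}(x)\ge -M$, and $(T^2F)\bigl(x-\widehat{TF}(x)\bigr)\le(T^2F)(x+m)$ forces $\widehat{TF}(x)\le -m$. That is the claim.

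I do not expect a genuine obstacle here: the whole statement is just the interplay of the order-reversal and translation-covariance of $T$ with the definition of the transform $F\mapsto\widehat F$. The one point to stay alert to is precisely that order-reversal, since it is what flips the inequalities and produces the sign change, so that the bound $m\le\widehat{F}\le M$ on the input becomes the bound $-M\le\widehat{TF}\le -m$ on the output with $m$ and $M$ exchanged; and the minor bookkeeping that $\widehat{TF}$ is well defined, which I would dispatch quickly using the regularity already set up in Lemma~\ref{lem:Freg1}.
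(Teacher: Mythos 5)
Your argument is correct and is essentially the paper's own proof: the paper obtains the same sandwich $T^2F(x+M)\le TF(x)\le T^2F(x+m)$ by integrating the inequality $TF(x-m)\le F(x)\le TF(x-M)$ to get $ITF(x+m)\le IF(x)\le ITF(x+M)$ and then applying the decreasing map $P$, which is exactly your ``order-reversal plus translation-covariance'' packaging of $T=P\circ I$. The only addition is your (harmless and correct) check that $\widehat{TF}$ is well defined, which the paper leaves implicit via Lemma~\ref{lem:Freg1}.
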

\begin{proof}
We have $TF(x-m)\le F(x)\le TF(x-M)$ for all $x\in\Real$. Consequently
\begin{equation*}
\int_{-x}^\infty TF(t-m)\rd t
\le\int_{-x}^\infty F(t)\rd t
\le\int_{-x}^\infty TF(t-M)\rd t
\end{equation*}
for all $x$, and so
\begin{equation} \label{eq:ITFineq1}
ITF(x+m)\le IF(x)\le ITF(x+M).
\end{equation}
Applying the decreasing map $P$ to (\ref{eq:ITFineq1}), we have
\begin{equation*}
T^2F(x+m)\ge TF(x)\ge T^2F(x+M).
\end{equation*}
This implies $-m\ge \widehat{TF}(x)\ge -M$ for all $x$.
\end{proof}

Applying the map $T$ once more gives $m\le\widehat{T^2F}(x)\le M$. Thus, if $\widehat{F}$ is bounded then $\inf \widehat{T^{2k} F}$ is increasing in $k$ and $\sup \widehat{T^{2k} F}$ is decreasing in $k$, and so they converge to some $m^*$ and $M^*$ respectively ($m^*\le M^*$):
\begin{equation} \label{eq:Tmaplimits}
 \inf \widehat{T^{2k} F}\uparrow m^* \text{ and } \sup \widehat{T^{2k} F} \downarrow M^* \text{ as } k\To\infty.
\end{equation}
We will show that $m^*=M^*$. 

We first show that the boundedness assumption is satisfied after a few iterations.
\begin{lemma} \label{lem:hatbounded}
 For any $F\in\mathcal{D}_1$, $\widehat{T^4 F}$ is bounded.
\end{lemma}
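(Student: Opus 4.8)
The plan is to control $G:=T^4F$ and its image $TG=T^5F$ near $+\infty$ and near $-\infty$ separately. Since $G\in\mathcal D_1$ (Lemma~\ref{lem:Freg1}), the transform $\widehat G$ is defined and continuous on all of $\Real$, and $a<\widehat G(x)<b$ iff $TG(x-a)<G(x)<TG(x-b)$. So it is enough to produce a constant $K$ with $TG(x+K)\le G(x)\le TG(x-K)$ for all $x$ outside some bounded interval; continuity of $\widehat G$ then handles what is left. Equivalently, I must show that $G$ and $TG$ lie within a bounded horizontal shift of one another at each of the two ends.

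The engine for the $+\infty$ end is the following observation. For $H\in\mathcal D_1$ set $D_H:=\int_{-\infty}^0\parl{1-H(t)}\rd t\in[0,\infty]$. Splitting $IH(x)=\int_{-x}^\infty H(t)\rd t$ at the origin gives, for $x\ge0$ and provided $D_H<\infty$,
\begin{equation*}
x+\int_0^\infty H(t)\rd t-D_H\;\le\;IH(x)\;\le\;x+\int_0^\infty H(t)\rd t ;
\end{equation*}
applying the decreasing map $P$ shows that $TH$ is squeezed between two fixed horizontal translates of $P$ on $[0,\infty)$. Since $P(y)$ agrees with $y^{d-1}e^{-y}$ up to constant factors for large $y$, this pins the tail of such a $TH$ at $+\infty$ to the order $x^{d-1}e^{-x}$, so any two images $TH_1,TH_2$ with $D_{H_1},D_{H_2}<\infty$ are within a bounded shift at $+\infty$. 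I will use this with $H_1=T^3F$, $H_2=T^4F$, so that $TH_1=G$ and $TH_2=TG$. The required finiteness follows from the auxiliary claim that $D_{TH}<\infty$ whenever $H\in\mathcal D_1$ decays exponentially at $+\infty$: from $1-P(y)=e^{-y}\sum_{i\ge d}y^i/i!\le y^d$ (all $y\ge0$) and $IH(x)\le\frac c\beta e^{\beta x}$ for very negative $x$ (when $H(t)\le c\,e^{-\beta t}$ for large $t$), we get $1-TH(x)\le(c/\beta)^d e^{d\beta x}$ near $-\infty$, which is integrable on $(-\infty,0)$. Finally, as in the proof of Lemma~\ref{lem:Freg1}, $TF$ decays exponentially at $+\infty$ for every $F\in\mathcal D$: picking $\alpha>0$, $x_0$ with $F\ge\alpha$ on $(-\infty,x_0)$ gives $IF(x)=\bigOmega(\alpha x)$, hence $TF(x)=\bigO(x^{d-1}e^{-\alpha x})$. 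Therefore $T^kF$ decays exponentially at $+\infty$ for all $k\ge1$, so $D_{T^kF}<\infty$ for all $k\ge2$; in particular $D_{T^2F},D_{T^3F},D_{T^4F}$ are finite, which settles the $+\infty$ end.

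For the $-\infty$ end I would add the lower estimate $1-P(y)\ge e^{-1}y^d/d!$ for $0\le y\le1$. Since $I(T^3F)(x)\to0$ as $x\to-\infty$, this makes $1-G(x)=1-P\parl{I(T^3F)(x)}$ comparable up to constant factors to $\parl{I(T^3F)(x)}^d$, and likewise $1-TG(x)$ comparable to $\parl{I(T^4F)(x)}^d$. By the previous paragraph (applied to $H=T^2F$ and to $H=T^3F$, both with finite $D$), the functions $T^3F$ and $T^4F$ decay at $+\infty$ to the order $x^{d-1}e^{-x}$; integrating the corresponding two-sided bounds, $I(T^3F)(x)$ and $I(T^4F)(x)$ are each of order $(-x)^{d-1}e^{x}$ as $x\to-\infty$, and hence $1-G$ and $1-TG$ are both of order $(-x)^{d(d-1)}e^{dx}$ near $-\infty$. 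Two positive functions with a common exponential rate and polynomial factor at $-\infty$ are within a bounded horizontal shift there, since a large enough shift of such a tail overcomes any fixed multiplicative constant. Combining this with the $+\infty$ comparison and the continuity of $\widehat G$ proves the lemma.

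The delicate step is the last paragraph: one must carry the relation of being comparable up to constant factors through the map $I$ and through $y\mapsto 1-P(y)$ (which multiplies the order of vanishing by $d$), and check that a bounded multiplicative discrepancy in a tail of the shape $(-x)^pe^{dx}$ is indeed absorbed by a bounded shift. This also explains why four iterations are used rather than two or three: when $d=1$, an $F\in\mathcal D_1$ decaying only polynomially at $+\infty$ may have $D_{TF}=\infty$, so $T^2F$ need not be squeezed between translates of $P$ and the $-\infty$ comparison can fail for $G=T^3F$, whereas $T^3F$ and $T^4F$ always are so squeezed.
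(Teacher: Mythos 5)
Your proof is correct and follows essentially the same route as the paper: you bootstrap two-sided tail estimates at $+\infty$ and $-\infty$ through four applications of $T$ (exponential decay of $TF$ at $+\infty$ with some rate $\alpha>0$, hence an integrable upper tail of the next iterate at $-\infty$, hence the sharp $\bigTheta\parl{x^{d-1}e^{-x}}$ and $1-\bigTheta\parl{\card{x}^{d(d-1)}e^{dx}}$ behaviors), then observe that $T^4F$ and $T^5F$ have matching tails up to constants absorbable by a bounded shift, and finish by continuity of $\widehat{T^4F}$ on the remaining compact set. Your device of $D_H=\int_{-\infty}^0\parl{1-H}$ and the sandwich of $TH$ between translates of $P$ is just a clean repackaging of the paper's order estimates (\ref{eq:ordera})--(\ref{eq:finalorder2}), and your explanation of why four iterations are needed matches the paper's reasoning.
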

\begin{proof}
We have
\begin{align*}
 P(y)&=\bigTheta\parl{y^{d-1}e^{-y}}&&\text{as }y\To\infty,\\
 P(y)&=1-\bigTheta\parl{y^d}&&\text{as }y\To 0.
\end{align*}
Since $\int_0^\infty F<\infty$ for any $F\in \mathcal{D}_1$, we can write
\begin{align}
 TF(x)&=\bigTheta\parl{\parl{\int_{-x}^{x_0}F}^{d-1} e^{-\int_{-x}^{x_0}F}}&&\text{as }x\To\infty, \label{eq:order1}\\
 TF(x)&=1-\bigTheta\parl{\parl{IF(x)}^d}=1-\bigTheta\parl{\parl{\int_{-x}^{\infty}F}^d}&&\text{as }x\To -\infty \label{eq:order2}.
\end{align}
for any $x_0$. The above equations hold with $T^k F$ on the left-hand side and $T^{k-1}F$ on the right-hand side, for $k\ge 1$.

As in Lemma~\ref{lem:Freg1}, we can find $\alpha>0$ such that $1\ge F(x)\ge \alpha$ for all sufficiently small $x$. Then by (\ref{eq:order1}), as $x\To\infty$
\begin{equation} \label{eq:ordera}
 TF(x)=\bigO\parl{x^{d-1}e^{-\alpha x}}\quad\text{and}\quad TF(x)=\bigOmega\parl{x^{d-1}e^{-x}}.
\end{equation}
As $x\To-\infty$, from (\ref{eq:ordera}) we get 
\begin{equation*}
 ITF(x)=\bigO\parl{\card{x}^{d-1}e^{\alpha x}}\quad\text{and}\quad ITF(x)=\bigOmega\parl{\card{x}^{d-1}e^{x}}.
\end{equation*}
Together with \ref{eq:order2}, as $x\To-\infty$, we get
\begin{equation} \label{eq:orderb}
 T^2 F(x)=1-\bigO\parl{\card{x}^{d(d-1)}e^{\alpha d x}}\quad\text{and}\quad T^2 F(x)=1-\bigOmega\parl{\card{x}^{d(d-1)}e^{d x}}.
\end{equation}

Again, using both bounds in (\ref{eq:orderb}) and using (\ref{eq:order1}), as $x\To\infty$
\begin{equation} \label{eq:orderc}
 T^3 F(x)=\bigTheta\parl{x^{d-1}e^{-x}}.
\end{equation}
Compare (\ref{eq:ordera}) and (\ref{eq:orderc}) to see that we have been able to tighten the upper bound in $T^3F$, as $x\To\infty$. Following the steps leading from (\ref{eq:ordera}) to (\ref{eq:orderb}), we now get, as $x\To-\infty$,
\begin{equation*}
 T^4 F(x)=1-\bigTheta\parl{\card{x}^{d(d-1)}e^{d x}}.
\end{equation*}

Repeating this argument, we inductively have that for all $k\ge 4$
\begin{align}
 T^k F(x)&=\bigTheta\parl{x^{d-1}e^{-x}},&&\text{as }x\To\infty\label{eq:finalorder1}\\
 T^k F(x)&=1-\bigTheta\parl{\card{x}^{d(d-1)}e^{d x}}&&\text{as }x\To-\infty\label{eq:finalorder2},
\end{align}
with the constants possibly depending on $k$.

The asymptotic behavior above for $k=4$ asserts the existence of $K>0$ such that $T^5 F(x+K)\le T^4 F(x)\le T^5 F(x-K)$ for all sufficiently large $x$, as well as for all sufficiently small $x$, and hence $\card{\widehat{T^4 F}(x)}\le K$ for such $x$. By continuity of $\widehat{T^4 F}(x),$ this function is bounded over $\Real$.
\end{proof}

Now we show that the terms $\inf \widehat{T^{2k} F}$ and $\sup \widehat{T^{2k} F}$ are strictly monotone in $k$ unless $\widehat{T^{2k} F}$ is constant.
\begin{lemma}\label{lem:Tmapstrict}
Suppose $F=T^4G$ for some $G\in\mathcal{D}_1$. If $\widehat{F}$ is not constant then $\sup \widehat{T^2 F} < \sup \widehat{F}$ and $\inf \widehat{T^2 F} > \inf \widehat{F}$.
\end{lemma}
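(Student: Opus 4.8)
The plan is to establish the first inequality $\sup\widehat{T^2F}<\sup\widehat F$; the second is obtained by the symmetric argument (all the comparisons between $F,TF,T^2F,T^3F$ below reversed, $M$ replaced by $m:=\inf\widehat F$, and $\sup$ by $\inf$). Write $M:=\sup\widehat F$. Since $F=T^4G$ with $G\in\mathcal D_1$, Lemma~\ref{lem:Freg1} puts $F,TF,T^2F,T^3F$ in $\mathcal D_1$ (each positive, strictly decreasing, differentiable; $P$ strictly decreasing where it matters), Lemma~\ref{lem:hatbounded} makes $\widehat F$, and hence $\widehat{T^2F}$ via Lemma~\ref{lem:Tmapboundshrink}, bounded, and — each of these four functions being $T^k$ of a $\mathcal D_1$ function with $k\ge 4$ — all four obey the tail estimates (\ref{eq:finalorder1})--(\ref{eq:finalorder2}); in particular $ITF(s)=s+\bigO(1)$ and $TF(s),T^2F(s)$ are $\bigO(s^{d-1}e^{-s})$ as $s\To\infty$. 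I use freely that $\widehat G(x)<b\iff G(x)<TG(x-b)$, and $TG=P(IG)$, $IG^\prime(x)=G(-x)$.

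\emph{One step of $T$: a gap on a half-line.} Since $\widehat F$ is continuous, bounded and nonconstant, there is an open interval $J=(a,b)$ and $\delta>0$ with $\widehat F\le M-\delta$ on $J$; so $F(x)\le TF(x-M+\delta)$ on $J$ and $F(x)\le TF(x-M)$ on $\Real$. Integrating ($I$ is order preserving, $TF$ strictly decreasing) gives $ITF(x+M)-IF(x)=\int_{-x}^\infty\parl{TF(t-M)-F(t)}\rd t\ge 0$ for all $x$, and $\ge c_0:=\int_J\parl{TF(t-M)-TF(t-M+\delta)}\rd t>0$ whenever $-x\le a$. Applying $P$ and using $T^2F=P(ITF)$,
\[
TF(t)-T^2F(t+M)\ \ge\ g(t):=P\!\parl{ITF(t+M)-c_0}-P\!\parl{ITF(t+M)}\ >\ 0\qquad(t\ge -a),
\]
and $TF(t)-T^2F(t+M)\ge 0$ for every $t$. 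From $-P^\prime(v)=v^{d-1}e^{-v}/(d-1)!$ and $ITF(t+M)=t+\bigO(1)$, one checks $g(t)\ge\kappa\,t^{d-1}e^{-t}$ for all large $t$, some $\kappa>0$.

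\emph{Second step of $T$, and uniformity.} Set $\psi(x):=ITF(x)-IT^2F(x-M)=\int_{-x}^\infty\parl{TF(t)-T^2F(t+M)}\rd t$. Its integrand is $\ge 0$ everywhere and $>0$ on $[-a,\infty)$, and $(-x,\infty)\cap[-a,\infty)$ is a nonempty interval for every $x$; hence $\psi(x)>0$ for all $x$, so $ITF(x)>IT^2F(x-M)$, and applying $P$ (with $T^2F=P(ITF)$, $T^3F=P(IT^2F)$) gives $T^2F(x)<T^3F(x-M)$, i.e. $\widehat{T^2F}(x)<M$, for every $x\in\Real$. To make this uniform, use the defining identity $IT^2F\parl{x-\widehat{T^2F}(x)}=ITF(x)$ and the mean value theorem for the strictly increasing $IT^2F$ (with $(IT^2F)^\prime=T^2F(-\,\cdot\,)$): there is $\xi\in(x-M,\,x-\widehat{T^2F}(x))\subseteq[x-M,x-m]$ with
\[
M-\widehat{T^2F}(x)\ =\ \psi(x)\big/ T^2F(-\xi).
\]
As $x\To+\infty$ the numerator tends to $C^*:=\int_\Real\parl{TF(t)-T^2F(t+M)}\rd t$, which is finite and strictly positive (nonnegative integrand, positive on $[-a,\infty)$), while $T^2F(-\xi)\To 1$; so $M-\widehat{T^2F}(x)\To C^*>0$. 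As $x\To-\infty$, $-\xi\To+\infty$, so $T^2F(-\xi)=\bigO(\card{x}^{d-1}e^{x})$ by the right-tail estimate, whereas $\psi(x)\ge\int_{-x}^\infty g(t)\rd t\ge(1+\smallO(1))\,\kappa\,\card{x}^{d-1}e^{x}$ by the lower bound on $g$; so $M-\widehat{T^2F}(x)$ stays bounded below by a positive constant. Thus $\widehat{T^2F}$ is bounded away from $M$ off a compact set, and on that set $\widehat{T^2F}$ is continuous and $<M$; hence $\sup_x\widehat{T^2F}(x)<M$, as required.

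I expect the crux to be the last step. Steps one and two produce, after a single application of $T$, only a \emph{half-line} on which the location-dependent shift strictly improves, and a priori that improvement could vanish at $+\infty$ and be absent near $-\infty$; turning it into a genuine \emph{uniform} gap — which is exactly what is needed to collapse the sequences in (\ref{eq:Tmaplimits}) onto a single limit $\gamma$ in the proof of Theorem~\ref{thm:uniquefixedpt} — forces the second application of $T$ together with the precise exponential asymptotics (\ref{eq:finalorder1})--(\ref{eq:finalorder2}), and this two-step structure is precisely why the statement is phrased for $T^2F$ rather than $TF$. (A minor point: in Step one the supremum of $\widehat F$ may or may not be attained, but continuity of $\widehat F$ produces the interval $J$ in either case.)
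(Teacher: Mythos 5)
Your proof is correct, and it shares the paper's overall skeleton: continuity of $\widehat F$ gives an interval $J$ on which $\widehat F\le M-\delta$, integration turns this into the additive gap $IF(x)\le ITF(x+M)-c_0$ on the half-line $x\ge -a$ (the paper's (\ref{eq:TfromIineq1})), a second application of $T$ yields the pointwise strict inequality $T^2F(x)<T^3F(x-M)$, and the real work is ruling out that the gap closes as $x\to\pm\infty$, using the tail asymptotics (\ref{eq:finalorder1})--(\ref{eq:finalorder2}) guaranteed by $F=T^4G$. Where you genuinely diverge is in that last step. The paper converts the additive gap into multiplicative bounds ($TF\ge\kappa_1 T^2F(\cdot+M)$ with $\kappa_1>1$, and later $T^2F\le\kappa_4T^3F(\cdot-M)$ with $\kappa_4<1$), then treats the two ends separately: at $-\infty$ via the small-argument expansion of $P$ and a Taylor expansion of $(T^3F)^{-1}$ with the derivative bounds (\ref{eq:derivative}), and at $+\infty$ via Taylor plus (\ref{eq:Tderivativebd}). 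You instead keep the gap in integral form $\psi(x)=ITF(x)-IT^2F(x-M)$ and exploit the exact identity $ITF(x)=IT^2F\bigl(x-\widehat{T^2F}(x)\bigr)$ (the same identity the paper uses later, in (\ref{eq:equalityintegrals1})) together with the mean value theorem for $IT^2F$, whose derivative is $T^2F(-\,\cdot)$, to get $M-\widehat{T^2F}(x)=\psi(x)/T^2F(-\xi)$; both limits then follow from comparing asymptotics of numerator and denominator, with the lower bound $g(t)\ge\kappa t^{d-1}e^{-t}$ doing the work at the left end. This buys a single unified mechanism for both tails and avoids the multiplicative bounds and inverse-function expansions, at the modest price of needing the two-sided localization $\xi\in[x-M,x-m]$, i.e.\ the bound $\widehat{T^2F}\ge\inf\widehat F$ from two applications of Lemma~\ref{lem:Tmapboundshrink}, which you correctly invoke (note $\widehat F$ is bounded precisely because $F=T^4G$, via Lemma~\ref{lem:hatbounded}). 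Your final compactness remark also makes explicit a step the paper leaves implicit when it passes from the two limsup bounds and the pointwise inequality to $\sup\widehat{T^2F}<M$. I see no gap; the only points worth a sentence in a write-up are the finiteness of $C^*$ (integrability of $TF$ at $+\infty$ and of $1-T^2F$ at $-\infty$, both from the tail estimates) and the fact that $P(ITF(t+M)-c_0)$ is well-defined on $t\ge-a$ because $ITF(t+M)-c_0\ge IF(t)\ge 0$ there.
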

\begin{proof}
By Lemma~\ref{lem:hatbounded}, $\widehat{F}$ is bounded, and $F$ follows the asymptotics (\ref{eq:finalorder1}) and (\ref{eq:finalorder2}).
Let $M=\sup \widehat{F}$. Since $\widehat{F}$ is not constant, by continuity, there exists an interval $(a,b)$ such that $\sup_{(a,b)}\widehat{F}\eqqcolon M^\prime<M$. We have $F(x)\le TF(x-M)$ for all $x\in\Real$, and $F(x)\le TF(x-M^\prime)$ for all $x\in(a,b)$. 

For $x\ge -a$,
\begin{align*}
\int_{-x}^\infty F(t)\rd t&=\int_{-x}^a F(t)\rd t+\int_{a}^b F(t)\rd t+\int_{b}^\infty F(t)\rd t\\
&\le\int_{-x}^a TF(t-M)\rd t+\int_{a}^b TF(t-M^\prime)\rd t+\int_{b}^\infty TF(t-M)\rd t\\
&=\int_{-x}^\infty TF(t-M)\rd t-\int_{a}^b(TF(t-M) - TF(t-M^\prime))\rd t.
\end{align*}
This implies
\begin{equation} \label{eq:TfromIineq1}
IF(x) \le ITF(x+M)-\kappa,
\end{equation}
where $\kappa>0$.
Since $P$ is strictly decreasing, we have $TF(x)>T^2 F(x+M)$, and also
\begin{align*}
TF(x)&\ge e^\kappa e^{-ITF(x+M)} \left(\sum_{i=0}^{d-1}\frac{(ITF(x+M)-\kappa)^i}{i!}\right)\\
&=e^\kappa T^2 F(x+M) \frac{\sum_{i=0}^{d-1}\frac{(ITF(x+M)-\kappa)^i}{i!}} {\sum_{i=0}^{d-1}\frac{(ITF(x+M))^i}{i!}}
\end{align*}
Choose $x$ sufficiently large so that $ITF(x+M)$ is large enough for the ratio of the sums above to be at least $e^{-\kappa/2}$. So we get 
\begin{equation} \label{eq:TfromIineq2}
TF(x) \ge \kappa_1 T^2 F(x+M),
\end{equation}
for $x$ larger than some $x_0$, with $\kappa_1=e^{\kappa/2}>1$.

Now, for $x\ge -x_0$
\begin{align}
 ITF(x)&=\int_{-x}^\infty TF(t)\rd t=\int_{-x}^{x_0} TF(t)\rd t+\int_{x_0}^{\infty} TF(t)\rd t\notag\\
 &\ge\int_{-x}^{x_0} T^2 F(t+M)\rd t+\int_{x_0}^{\infty} \kappa_1 T^2 F(t+M)\rd t\notag\\
 &=\int_{-x}^{\infty} T^2 F(t+M)\rd t+\int_{x_0}^{\infty} (\kappa_1-1) T^2 F(t+M)\rd t\notag\\
 &=IT^2F(x-M)+\kappa_2,\label{eq:Tstrictineq1}
\end{align}
where $\kappa_2>0$.
For $x< -x_0$
\begin{align}
 ITF(x)&=\int_{-x}^\infty TF(t)\rd t\ge\int_{-x}^{\infty} \kappa_1 T^2 F(t+M)\rd t\notag\\
 &=\kappa_1 IT^2F(x-M).\label{eq:Tstrictineq2}
\end{align}
Inequalities (\ref{eq:Tstrictineq1}) and (\ref{eq:Tstrictineq2}) directly give $T^2 F(x)<T^3 F(x-M)$ for all $x$, and so $\widehat{T^2 F}(x)<M$ for all $x$. We now have to establish the strict inequality in the limit as $x\To\infty$ and $x\To-\infty$.

When $x\To-\infty$, $IG(x)$ approaches 0 for any $G\in\mathcal{D}_1$. The $P$ function satisfies
\begin{equation*}
 P(y)=1-\frac{y^d}{d!}+\bigO\parl{y^{d+1}},\quad\text{as }y\To 0.
\end{equation*}
Therefore, for a fixed $k$,
\begin{equation*}
 P(ky)=P(y)-\frac{y^d}{d!}(k^d-1)+\bigO\parl{y^{d+1}},\quad\text{as }y\To 0.
\end{equation*}
By (\ref{eq:Tstrictineq2}), as $x\To -\infty$,
\begin{equation*}
 T^2 F(x)\le T^3 F(x-M)-\frac{\parl{IT^2F(x-M)}^d}{d!}(\kappa_1^d - 1)+\bigO\parl{\parl{IT^2F(x-M)}^{d+1}}.
\end{equation*}
By (\ref{eq:finalorder1}), $IT^2 F(x)=\bigTheta\parl{\card{x}^{d-1}e^x}$ for $x\To -\infty$. Substituting this in the above equation, we get
\begin{equation*}
 T^2 F(x)\le T^3 F(x-M)-c_1 \card{x}^{d(d-1)}e^{dx}+c_2 \card{x}^{(d+1)(d-1)}e^{(d+1)x},
\end{equation*}
where $c_1>0$, and $x$ is sufficiently small.
Now 
\begin{align*}
\widehat{T^2 F}(x)&=x-\parl{T^3 F}^{-1}\parl{T^2 F(x)} \\
&\le x-\parl{T^3 F}^{-1}\parl{T^3 F(x-M)-c_1 \card{x}^{d(d-1)}e^{dx}+c_2 \card{x}^{(d+1)(d-1)}e^{(d+1)x}},\\
\intertext{which by Taylor's expansion}
 &= x - \parl{ (x-M) - \frac{c_1 \card{x}^{d(d-1)}e^{dx}-c_2 \card{x}^{(d+1)(d-1)}e^{(d+1)x}}{T^3F^\prime(x-M)} + o(1) }
\end{align*}
as $x\To-\infty$.
Using (\ref{eq:finalorder1}), (\ref{eq:finalorder2}) with (\ref{eq:derivative}), gets us the bound $-T^3F^\prime(x-M)=\bigO(\card{x}^{d(d-1)}e^{dx})$ as $x\To-\infty$. So we get
\begin{align*}
 \widehat{T^2 F}(x) \le M - (\kappa_3 -c_3 \card{x}^{d-1}e^{x} - o(1))
\end{align*}
as $x\To-\infty$.
where $\kappa_3>0$. Thus
\begin{equation*}
 \limsup_{x\To-\infty}\widehat{T^2 F}(x)\le M - \kappa_3 < M.
\end{equation*}

We use the steps that give (\ref{eq:TfromIineq2}) from (\ref{eq:TfromIineq1}) to derive the following from (\ref{eq:Tstrictineq1}):
\begin{equation*} 
T^2 F(x) \le \kappa_4 T^3 F(x-M),
\end{equation*}
for $x$ larger than some $x_1$, and $\kappa_4=e^{-\kappa_1/2}<1$.

Now 
\begin{align*}
\widehat{T^2 F}(x)&=x-\parl{T^3 F}^{-1}\parl{T^2 F(x)} \\
&\le x-\parl{T^3 F}^{-1}\parl{\kappa_4 T^3 F(x-M)}\\
&= x-(T^3 F)^{-1}\parl{T^3 F(x-M)-(1-\kappa_4) T^3 F(x-M)}\\
&=x-\parl{x-M-\frac{(1-\kappa_4) T^3 F(x-M)}{\parl{T^3 F}^\prime\parl{\parl{T^3 F}^{-1}(\tau)}}},
\end{align*}
the last equality by Taylor's expansion; $\tau\in[\kappa_4 T^3 F(x-M),T^3 F(x-M)]$.

Using the inequality (\ref{eq:Tderivativebd}), we get
\begin{align*}
\widehat{T^2 F}(x)&\le M-\frac{(1-\kappa_4) T^3 F(x-M)}{\tau}\\
&\le M-(1-\kappa_4),
\end{align*}
for $x>x_1$.
This gives
\begin{equation*}
 \limsup_{x\To\infty}\widehat{T^2 F}(x)\le M - (1-\kappa_4) < M.
\end{equation*}
Consequently, $\sup \widehat{T^2 F} < M$.

The proof for the infimum is along the same lines, and we omit the details.
\end{proof}

Lemma~\ref{lem:hatbounded} and Lemma~\ref{lem:Tmapboundshrink} imply that $\widehat{T^k F}$ is uniformly bounded for all $k\ge 4$: there exists $K>0$ such that
\begin{equation}\label{eq:allhatunifbd}
\card{ \widehat{T^k F}}\le K,
\end{equation}
and hence
\begin{equation}\label{eq:shiftunifbd}
T^{k+1} F(x+K)\le T^k F(x)\le T^{k+1} F(x-K),
\end{equation}
for all $x$.

$\widehat{T^k F}$ is the shift to $T^{k+1} F$ to match it to $T^k F$. The following Lemma bounds the shifts of the entire family $\set{T^k F}_{k\ge 4}$.
\begin{lemma}\label{lem:alltailuniformbd}
 Suppose $F\in\mathcal{D}_1$ is such that $\widehat{F}$ is bounded. Then there exists $\rho>0$ such that
 \begin{equation}\label{eq:alltailuniformbd}
  F(x+\rho) \le T^k F(x) \le F(x-\rho)
 \end{equation}
for all $x\in\Real$ and all $k\ge 4$.
\end{lemma}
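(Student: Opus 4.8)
The plan is to turn the one-step shift bound \eqref{eq:shiftunifbd} into a bound on the \emph{accumulated} shift between $F=T^0F$ and $T^kF$ that does not grow with $k$. Write $\alpha_j=\inf\widehat{T^jF}$ and $\beta_j=\sup\widehat{T^jF}$. Iterating Lemma~\ref{lem:Tmapboundshrink} from $\alpha_0\le\widehat F\le\beta_0$ gives, for every $j$ and every $x$, the pointwise inclusions $\widehat{T^{2i}F}(x)\in[\alpha_0,\beta_0]$, $\widehat{T^{2i+1}F}(x)\in[-\beta_0,-\alpha_0]$, together with $\alpha_{j+1}\ge-\beta_j$, $\beta_{j+1}\le-\alpha_j$, and hence $[\alpha_{j+2},\beta_{j+2}]\subseteq[\alpha_j,\beta_j]$; in particular $\ell_i:=\beta_{2i}-\alpha_{2i}$ is nonincreasing. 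From the definition of $\widehat{T^jF}$ and the monotonicity of the iterates one has, for all $x$, $T^{j+1}F(x)\le T^jF(x+\alpha_j)$ and $T^{j+1}F(x)\ge T^jF(x+\beta_j)$; chaining these inequalities down to $j=0$ yields
\[
 F\Bigl(x+\sum_{j=0}^{k-1}\beta_j\Bigr)\;\le\; T^kF(x)\;\le\; F\Bigl(x+\sum_{j=0}^{k-1}\alpha_j\Bigr),\qquad x\in\Real .
\]
Since $F$ is strictly decreasing, the lemma (and, incidentally, the same statement for every $k\ge 0$) follows as soon as $\sum_{j<k}\beta_j$ is bounded above and $\sum_{j<k}\alpha_j$ is bounded below, uniformly in $k$.

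\textbf{What remains.} Summing the inequalities $\alpha_{j+1}+\beta_j\ge0$ and $\beta_{j+1}+\alpha_j\le0$ over $j$ shows that $\sum_{j<k}(\alpha_j+\beta_j)$ stays in the fixed interval $[\alpha_0-K,\beta_0+K]$, with $K=\max(|\alpha_0|,|\beta_0|)$. So the two partial sums are controlled jointly, and everything reduces to a uniform bound on $\sum_{j<k}(\beta_j-\alpha_j)$; using $\beta_{2i+1}-\alpha_{2i+1}\le(-\alpha_{2i})-(-\beta_{2i})=\ell_i$ together with $\beta_{2i}-\alpha_{2i}=\ell_i$, this partial sum is at most $2\sum_i\ell_i$. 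Hence the entire statement comes down to showing that the shift oscillations $\ell_i=\beta_{2i}-\alpha_{2i}$ are \emph{summable}.

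\textbf{The obstacle.} This last point is the heart of the matter, and I expect it to be the main obstacle. Lemma~\ref{lem:Tmapstrict}, which applies since $T^{2i}F=T^4(T^{2i-4}F)$ with $T^{2i-4}F\in\mathcal{D}_1$ for $i\ge2$, already gives that $\ell_i$ is \emph{strictly} decreasing whenever $\widehat{T^{2i}F}$ is non-constant, but strictness alone gives no summable rate, and one may not invoke the eventual constancy $m^*=M^*$ of the shift here, since that is proved only later, with the help of this very lemma. The way I would close the gap is to exploit the rigid tail forms \eqref{eq:finalorder1}--\eqref{eq:finalorder2}, which hold for every $T^kF$ with $k\ge4$: matching $T^{2i}F(x)=T^{2i+1}F(x-\widehat{T^{2i}F}(x))$ against these forms forces $\widehat{T^{2i}F}(x)$ to converge, as $x\to+\infty$ and as $x\to-\infty$, to explicit limits built from the ratios of the tail constants, so that the oscillation $\ell_i$ is governed by the behaviour of $\widehat{T^{2i}F}$ on a \emph{fixed} compact window. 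On that window the constants entering the proof of Lemma~\ref{lem:Tmapstrict} (the various $\kappa$'s, which there quantify how much the gap $M-M'$ and the length $b-a$ of the gap-interval push the shift down on the next application of $T^2$) can be taken bounded away from their degenerate values by an amount proportional to $\ell_i$, so that each application of $T^2$ reduces the oscillation by a fixed proportion, $\ell_{i+1}\le(1-c)\,\ell_i$ with $c\in(0,1)$ independent of $i$; geometric decay then gives $\sum_i\ell_i<\infty$. The delicate part is precisely to make this per-step contraction quantitative and uniform in $i$ using only the tail asymptotics that are available at this stage. Feeding a finite bound on $\sum_i\ell_i$ back through the reduction above produces a constant $\rho>0$, independent of $k$, with $\bigl|\sum_{j<k}\alpha_j\bigr|\le\rho$ and $\bigl|\sum_{j<k}\beta_j\bigr|\le\rho$ for all $k\ge4$, which is the assertion; the few cases $k=4,5$ can alternatively be dispatched directly, $T^4F$ and $T^5F$ being within a fixed shift of $F$.
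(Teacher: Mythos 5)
Your reduction is sound as far as it goes: the chaining $F\bigl(x+\sum_{j<k}\beta_j\bigr)\le T^kF(x)\le F\bigl(x+\sum_{j<k}\alpha_j\bigr)$ is correct, and via Lemma~\ref{lem:Tmapboundshrink} the boundedness of the two partial sums does reduce to summability of the oscillations $\ell_i=\sup\widehat{T^{2i}F}-\inf\widehat{T^{2i}F}$. But that summability is exactly where the proof stops: you assert a uniform geometric contraction $\ell_{i+1}\le(1-c)\ell_i$ and yourself flag that making it "quantitative and uniform in $i$" is the delicate part. Nothing in the material available at this stage delivers it. Lemma~\ref{lem:Tmapstrict} gives only strict decrease, with constants ($\kappa,\kappa_1,\ldots$) tied to an interval $(a,b)$ and a gap $M-M'$ over which you have no uniform control along the sequence. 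Your proposed fix — that the tail forms (\ref{eq:finalorder1})--(\ref{eq:finalorder2}) pin $\widehat{T^{2i}F}(x)$ at $x\to\pm\infty$ to limits determined by ratios of tail constants, so the oscillation lives on a fixed compact window — does not work as stated, because those $\bigTheta$ constants are explicitly allowed to depend on $k$ (the paper says so right after (\ref{eq:finalorder2})); uniformity of the tail constants across iterates is precisely what has to be proved. Moreover, uniform control of $\widehat{T^kF}$ outside a fixed compact set is the content of Lemma~\ref{lem:hatuniformcontrol}, whose proof in the paper invokes the present lemma, so leaning on that kind of statement here risks circularity. In short, the heart of the argument is missing, and what is missing is not a routine verification: it is comparable in strength to the convergence of the shifts, which the paper only obtains later via the Arzel\`a--Ascoli compactness argument in the proof of Theorem~\ref{thm:uniquefixedpt}.

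For comparison, the paper's proof of this lemma does not sum shifts at all. It works with $G=T^4F$ and shows that the explicit tail envelope is \emph{self-reproducing} under $T^2$ with the \emph{same} constant: if $G(x)\le cx^{d-1}e^{-x}$ for $x\ge M$, then (for $c$ chosen larger than a threshold $b_4e^b$ and $M$ large, with all other constants independent of $G$) also $T^2G(x)\le cx^{d-1}e^{-x}$ for $x\ge M$, and correspondingly $T^{2k+1}G(x)\ge 1-C\card{x}^{d(d-1)}e^{dx}$ at $-\infty$ with a common $C$. Induction then gives uniform-in-$k$ two-sided tail envelopes for all iterates (repeating the argument from $TG$), and the sandwich $F(x+\rho)\le T^kF(x)\le F(x-\rho)$ follows directly from these envelopes. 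If you want to salvage your route, you would essentially have to prove this invariant tail estimate first — at which point the oscillation bookkeeping becomes unnecessary.
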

\begin{proof}
 Let $G=T^4 F$. By (\ref{eq:finalorder1}), there exists $c>0$ such that
 \begin{equation} \label{eq:firsttailbd}
  G(x) \le cx^{d-1}e^{-x}
 \end{equation}
 for all $x$ larger than some $M$.
 For $x\le -M$,
 \begin{equation*} 
  IG(x) =\int_{-x}^\infty G(t)\rd t\le b_1 c \card{x}^{d-1}e^{x},
 \end{equation*}
 with some $b_1>0$,
and
 \begin{equation} \label{eq:secondtailbd}
  TG(x)=P(IG(x)) \ge 1-b_2 b_1^d c^d \card{x}^{d(d-1)}e^{dx},
 \end{equation}
 with some $b_2>0$.
Then for $x\ge M$,
 \begin{align} 
  ITG(x) &=\int_{-x}^\infty TG(t)\rd t\\
&\ge \int_{-x}^{-M}(1-b_2 b_1^d c^d \card{t}^{d(d-1)}e^{dt})\rd t+\int_{-M}^\infty TG(t)\rd t\\
&\ge x-M-b_3 b_2 b_1^d c^d M^{d(d-1)}e^{-dM}+\int_{-M}^\infty TG(t)\rd t, \label{eq:continueineq}
 \end{align}
with some $b_3>0$.

Now
\begin{align*}
\int_{-M}^\infty TG(t)\rd t = P^{-1}\parl{T^2 G(M)}.
\end{align*}
By applying (\ref{eq:shiftunifbd}) twice, we get
\begin{equation*}
T^2 G(M)\le G(M-2K).
\end{equation*}
Using (\ref{eq:Tderivativebd}) we have
\begin{equation*}
G(M-2K)\le e^{2K}G(M) \le e^{2K} c M^{d-1} e^{-M},
\end{equation*}
where the last inequality comes from (\ref{eq:firsttailbd}).
We can find a constant $b>0$ such that
\begin{align*}
\int_{-M}^\infty TG(t)\rd t \ge M-b.
\end{align*}
Now the inequality (\ref{eq:continueineq}) becomes
\begin{align*}
ITG(x)\ge x-b-b_3 b_2 b_1^d c^d M^{d(d-1)}e^{-dM},
\end{align*}
for $x\ge M$.
We can make the term on the right large such that
\begin{align*}
T^2G(x) &\le b_4(x-b-b_3 b_2 b_1^d c^d M^{d(d-1)}e^{-dM})^{d-1}e^{-(x-b-b_3 b_2 b_1^d c^d M^{d(d-1)}e^{-dM})}\\
&\le b_4 x^{d-1} e^{-x} e^{b+b_3 b_2 b_1^d c^d M^{d(d-1)}e^{-dM}},
\end{align*}
with some $b_4>0$, for all $x\ge M$.
If we take $c>b_4 e^b$, then for $M$ suitably large, the constant above can be made smaller than $c$, and so we have
\begin{equation*}
T^2G(x)\le c x^{d-1}e^{-x},
\end{equation*}
for all $x\ge M$,
whenever the same holds for $G$. Note that none of the constants other than $c$ depend on the starting distribution $G$. Hence, by induction, this inequality holds for all $T^{2k}G$, $k\ge 0$. The inequality (\ref{eq:secondtailbd}) then holds for all $T^{2k+1}G$, $k\ge 0$. Repeating the argument starting with $TG$ instead of $G$, we see that all the tail bounds, both at $+\infty$ and $-\infty$, can be written with a common constant, and so (\ref{eq:alltailuniformbd}) holds with some $\rho>0$.
\end{proof}

The following Lemma asserts that the functions $\widehat{T^kF}$ are essentially constant outside a compact set.
\begin{lemma} \label{lem:hatuniformcontrol}
For any $F\in\mathcal{D}_1$,
\begin{gather*}
 \sup_{k\ge 5}\sup_{x>M} \card{\widehat{T^kF}(x)-\widehat{T^kF}(M)} \xrightarrow{M\To\infty} 0\\ 
\sup_{k\ge 6} \sup_{x>M} \card{\widehat{T^kF}(-x)-\widehat{T^kF}(-M)} \xrightarrow{M\To\infty} 0.
\end{gather*}
\end{lemma}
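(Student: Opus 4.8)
The plan is to show that, once $k$ is past a fixed index, $\widehat{T^kF}$ agrees outside a compact set with a $k$-dependent constant up to an error that is $\bigO(1/x)$ as $x\To+\infty$ and $\bigO(1/\card x)$ as $x\To-\infty$, with these error bounds \emph{uniform in $k$}; the two displayed limits then follow at once. Write $g_k=T^kF$. The engine is Lemma~\ref{lem:alltailuniformbd}, applied to $T^4F$ (whose transform is bounded, by Lemma~\ref{lem:hatbounded}): it yields a fixed $\rho>0$ with $T^4F(x+\rho)\le g_k(x)\le T^4F(x-\rho)$ for all sufficiently large $k$, so, by (\ref{eq:finalorder1})--(\ref{eq:finalorder2}) for the \emph{fixed} function $T^4F$, one has $g_k(x)=\bigTheta(x^{d-1}e^{-x})$ as $x\To\infty$ and $1-g_k(x)=\bigTheta(\card x^{d(d-1)}e^{dx})$ as $x\To-\infty$ with implied constants independent of $k$.

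Next I would expand $Ig_k$. Writing $Ig_k(x)=x+\beta_k+\epsilon_k(x)$ with $\beta_k=\int_0^\infty g_k-\int_{-\infty}^0(1-g_k)$ and $\epsilon_k(x)=\int_{-\infty}^{-x}(1-g_k(t))\rd t\ge 0$, the sandwich above together with the integrability of $T^4F$ at $+\infty$ and of $1-T^4F$ at $-\infty$ makes $\card{\beta_k}\le B$ and $0\le\epsilon_k(x)\le C_1 x^{d(d-1)}e^{-dx}$ (for $x$ large) uniformly in $k$. Since $g_j=P(Ig_{j-1})$ and $P(y)=e^{-y}Q(y)$ with $Q(y)=\sum_{i=0}^{d-1}y^i/i!$ satisfying $\log Q(y)=(d-1)\log y-\log((d-1)!)+\bigO(1/y)$, this gives, for $j$ and $x$ large,
\begin{equation*}
\log g_j(x)=-(x+\beta_{j-1}+\epsilon_{j-1}(x))+\log Q(x+\beta_{j-1}+\epsilon_{j-1}(x)),
\end{equation*}
the $\bigO(1/y)$ being uniform once $y\ge x-B$.

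Now for the behaviour at $+\infty$: put $s=\widehat{T^kF}(x)$, so $g_k(x)=g_{k+1}(x-s)$ and $\card s\le K$ by (\ref{eq:allhatunifbd}). Equating the last display for $j=k$ at $x$ with that for $j=k+1$ at $x-s$, and using $\card{\beta_{k-1}},\card{\beta_k}\le B$ and $\card s\le K$ to note that the two arguments of $\log Q$ equal $x+\bigO(1)$ and differ by $\bigO(1)$, the $\log Q$-terms cancel up to $\bigO(1/x)$, so
\begin{equation*}
\widehat{T^kF}(x)=(\beta_k-\beta_{k-1})+\epsilon_k(x-s)-\epsilon_{k-1}(x)+\bigO(1/x)=(\beta_k-\beta_{k-1})+\bigO(1/x)
\end{equation*}
as $x\To\infty$, uniformly in $k\ge 5$ (one needs $k-1\ge 4$ so that the expansion applies to $g_{k-1}$); hence $\sup_{k\ge 5}\sup_{x>M}\card{\widehat{T^kF}(x)-\widehat{T^kF}(M)}=\bigO(1/M)\To 0$. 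The $-\infty$ assertion is obtained the same way from $1-g_k$: from $g_k=P(Ig_{k-1})$ and $P(y)=1-y^d/d!+\bigO(y^{d+1})$ as $y\To 0$ one has $1-g_k(x)=\tfrac1{d!}(Ig_{k-1}(x))^d(1+\bigO(Ig_{k-1}(x)))$, and inserting the $+\infty$-expansion of $g_{k-1}$ into $Ig_{k-1}(x)=\int_{-x}^\infty g_{k-1}$, with $\int_u^\infty t^{d-1}e^{-t}\rd t=u^{d-1}e^{-u}(1+\bigO(1/u))$, gives $Ig_{k-1}(x)=\tfrac{e^{-\beta_{k-2}}}{(d-1)!}\card x^{d-1}e^{x}(1+\bigO(1/\card x))$ and then $1-g_k(x)=\lambda_k\card x^{d(d-1)}e^{dx}(1+\bigO(1/\card x))$ with $\lambda_k=e^{-d\beta_{k-2}}/(d!\,((d-1)!)^d)$, all uniformly in $k\ge 6$ (now $k-2\ge 4$ is required, which is why a further iteration is spent). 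Equating $1-g_k(x)$ with $1-g_{k+1}(x-\widehat{T^kF}(x))$ and taking logarithms, using $\card{\widehat{T^kF}}\le K$ and $\log\card{x-s}=\log\card x+\bigO(1/\card x)$, yields $\widehat{T^kF}(x)=\tfrac1d\log(\lambda_{k+1}/\lambda_k)+\bigO(1/\card x)=(\beta_{k-2}-\beta_{k-1})+\bigO(1/\card x)$ as $x\To-\infty$, uniformly in $k\ge 6$; so $\sup_{k\ge 6}\sup_{x>M}\card{\widehat{T^kF}(-x)-\widehat{T^kF}(-M)}=\bigO(1/M)\To 0$.

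The only genuine difficulty is the uniformity in $k$ of all these $\bigO$-terms: for a single $k$ each identity above is merely a restatement of (\ref{eq:finalorder1})--(\ref{eq:finalorder2}), the limiting constants being $\beta_k-\beta_{k-1}$ at $+\infty$ and $\beta_{k-2}-\beta_{k-1}$ at $-\infty$ (generally distinct, consistently with $\widehat{T^kF}$ not being globally constant), and the uniform versions rest precisely on the tail estimates with a common constant furnished by Lemma~\ref{lem:alltailuniformbd}, i.e.\ on $\card{\beta_k}\le B$ and the uniform bound for $\epsilon_k$. The gap between the thresholds $5$ and $6$ is structural, since the tail of $g_k$ at $+\infty$ is controlled by $g_{k-1}$ and at $-\infty$ by $g_{k-2}$.
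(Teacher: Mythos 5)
Your proposal is correct, but it proves the lemma by a genuinely different route than the paper. You derive explicit first-order asymptotics of $g_k=T^kF$ at both ends — writing $Ig_k(x)=x+\beta_k+\epsilon_k(x)$ and pushing it through $P$ — and thereby identify the actual limits $\widehat{T^kF}(x)\to\beta_k-\beta_{k-1}$ as $x\to\infty$ and $\widehat{T^kF}(x)\to\beta_{k-2}-\beta_{k-1}$ as $x\to-\infty$, with an explicit $\bigO(1/M)$ rate that is uniform in $k$ because Lemma~\ref{lem:alltailuniformbd} and (\ref{eq:allhatunifbd}) make $\card{\beta_k}$, $\epsilon_k$ and the shift $s$ uniformly controlled. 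The paper is softer: it never expands $T^kF$, but starts from the exact identity $\int_{-M}^\infty G=\int_{-M+\widehat{TG}(M)}^\infty TG$ (equation (\ref{eq:intdiff1})), shows via the uniform $-\infty$ tails (\ref{eq:finalorder2}) that an extra slack $\epsilon$ in the shift absorbs the error in the integral over $[-x,-M]$, and applies the monotone map $P$ to get $\card{\widehat{TG}(x)-\widehat{TG}(M)}\le\epsilon$ for $x>M$; the $-\infty$ statement is then not proved directly but transferred from the $+\infty$ statement through $ITG(-x)=IT^2G(-x-\widehat{T^2G}(-x))$ (equation (\ref{eq:equalityintegrals1})), which costs one more iteration exactly as your $\beta_{k-2}$ does. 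Your version buys more: a quantitative rate, the identification of the limiting constants (and, pleasingly, your limits satisfy $\lim_{x\to-\infty}\widehat{T^kF}(x)=-\lim_{x\to+\infty}\widehat{T^{k-1}F}(x)$, which is precisely the relation implicit in the paper's second part), at the price of more computation with $\log Q$ and incomplete-gamma asymptotics. One small bookkeeping point you should make explicit: applying Lemma~\ref{lem:alltailuniformbd} to $T^4F$ gives the $k$-independent sandwich only for $T^jF$ with $j\ge 8$; the finitely many indices $5\le k<k_0$ must be covered by the individual asymptotics (\ref{eq:finalorder1})--(\ref{eq:finalorder2}) (constants depending on $k$), which is harmless since a finite maximum of quantities tending to $0$ tends to $0$ — but as written your ``uniformly in $k\ge 5$'' slightly overstates what the sandwich alone gives.
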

\begin{proof}
Take $G=T^kF$ for some $k\ge 4$. We have by definition
\begin{equation*}
TG(M)=T^2G\parl{M-\widehat{TG}(M)}, 
\end{equation*}
which implies
\begin{equation}\label{eq:intdiff1}
\int_{-M}^\infty G = \int_{-M+\widehat{TG}(M)}^\infty TG.
\end{equation}
Fix $\epsilon>0$. For sufficiently large $M$, and for $x>M$, consider the difference
\begin{align}
\int_{-x}^{-M}G&-\int_{-x+\widehat{TG}(M)-\epsilon}^{-M+\widehat{TG}(M)}TG&&\notag\\
&\le \int_{-x}^{-M}G - \int_{-x+K-\epsilon}^{-M+K}TG &&\text{by (\ref{eq:allhatunifbd})}\notag\\
&\le \int_{-x}^{-M}1 - \int_{-x+K-\epsilon}^{-M+K}\parl{1-\bigTheta\parl{\card{t}^{d(d-1)}e^{dt}}\rd t} &&\text{by (\ref{eq:finalorder2})}\notag\\
&\le (x-M) - (x-M+\epsilon) \int_{-\infty}^{-M+K}\bigTheta\parl{\card{t}^{d(d-1)}e^{dt}}\rd t && \notag\\
&\le (x-M) - \parl{x-M+\epsilon-c (M-K)^{d(d-1)}e^{-d(M-K)}}&&\notag\\
&\le -\epsilon+c(M-K)^{d(d-1)}e^{-d(M-K)}.&&\label{eq:intdiff2}
\end{align}
where, by Lemma~\ref{lem:alltailuniformbd} and (\ref{eq:allhatunifbd}), $c$ and $K$ do not depend on $G$. The above difference is negative for $M$ sufficiently large. Adding (\ref{eq:intdiff1}) and (\ref{eq:intdiff2}), and using that $P$ is decreasing and the fact that the right-hand side of (\ref{eq:intdiff2}) is negative, we have
\begin{equation*}
TG(x) \ge T^2G\parl{x-\widehat{TG}(M)+\epsilon},
\end{equation*}
which means
\begin{equation} \label{eq:firstpartlowerbd}
\widehat{TG}(x)\ge \widehat{TG}(M)-\epsilon.
\end{equation}
Similar calculation by using (\ref{eq:finalorder2}) on $G$ this time shows that 
\begin{equation} \label{eq:firstpartupperbd}
\widehat{TG}(x)\le \widehat{TG}(M)+\epsilon.
\end{equation}
This holds uniformly for all $T^kF$, $k\ge 5$, which establishes the first part of the Lemma.

We now show the second part. Observe that by definition 
\begin{equation*}
T^2G(-x)=T^3G\parl{-x-\widehat{T^2G}(-x)}, 
\end{equation*}
and so by applying $P^{-1}$, we get
\begin{equation}\label{eq:equalityintegrals1}
ITG(-x)=\int_{x}^\infty TG(t)\rd t=\int_{x+\widehat{T^2G}(-x)}^\infty T^2G(t)\rd t=IT^2G\parl{-x-\widehat{T^2G}(-x)}.
\end{equation}
Using (\ref{eq:firstpartlowerbd}) and (\ref{eq:firstpartupperbd}), we have, for all $t\ge M$,
\begin{equation*}
T^2G(t-\widehat{TG}(M)+\epsilon) \le TG(t) \le T^2G(t-\widehat{TG}(M)-\epsilon),
\end{equation*}
and so 
\begin{equation}\label{eq:inequalityintegrals1}
\int_{x}^\infty T^2G(t-\widehat{TG}(M)+\epsilon)\rd t \le \int_{x}^\infty TG(t)\rd t \le \int_{x}^\infty T^2G(t-\widehat{TG}(M)-\epsilon)\rd t,
\end{equation}
when $x>M$.
Using the equality (\ref{eq:equalityintegrals1}) in (\ref{eq:inequalityintegrals1}), we have
\begin{equation*}
\int_{x}^\infty T^2G(t-\widehat{TG}(M)+\epsilon)\rd t \le \int_{x+\widehat{T^2G}(-x)}^\infty T^2G(t)\rd t \le \int_{x}^\infty T^2G(t-\widehat{TG}(M)-\epsilon)\rd t,
\end{equation*}
i.e.,
\begin{equation*}
\int_{x-\widehat{TG}(M)+\epsilon}^\infty T^2G(t)\rd t \le \int_{x+\widehat{T^2G}(-x)}^\infty T^2G(t)\rd t \le \int_{x-\widehat{TG}(M)-\epsilon}^\infty T^2G(t)\rd t.
\end{equation*}
This gives
\begin{equation*}
-\widehat{TG}(M)+\epsilon \ge \widehat{T^2G}(-x) \ge \widehat{TG}(M)-\epsilon.
\end{equation*}
Substituting $x$ by $M$,
\begin{equation*}
-\widehat{TG}(M)+\epsilon \ge \widehat{T^2G}(-M) \ge \widehat{TG}(M)-\epsilon.
\end{equation*}
Taking the difference, we get
\begin{equation*}
2\epsilon \ge \widehat{T^2G}(-x)-\widehat{T^2G}(-M) \ge -2\epsilon.
\end{equation*}
This proves the second part of the Lemma.
\end{proof}

%\sh
We now have all the ingredients for the proof of Theorem~\ref{thm:uniquefixedpt}.
\begin{proof}[Proof of Theorem~\ref{thm:uniquefixedpt}]
Take $G=T^6 F$. By Lemmas~\ref{lem:Freg1} and \ref{lem:hatbounded}, for all $k\ge 0$, $\widehat{T^kG}$ are uniformly bounded, and 
\begin{equation*}
 \inf \widehat{T^{2k}G} \uparrow m^*\quad\text{and}\quad \sup \widehat{T^{2k}G} \downarrow M^*.
\end{equation*}
By Lemma~\ref{lem:alltailuniformbd},
$T^kG$ lie within the set
 \begin{equation}\label{eq:setforallcdfs}
 \set{G_1\in\mathcal{D}_1 \mid G(x+\rho) \le G_1(x) \le G(x-\rho) \text{ for all }x}.
 \end{equation}
 
The functions $T^{2k}G,k\ge 0$ are bounded and 1-Lipschitz. By Arzela-Ascoli theorem this sequence is relatively compact with respect to compact convergence. There exists a subsequence that converges uniformly over compacts:
\begin{equation}\label{eq:Titercvg}
 T^{2\vartheta(k)}G\xrightarrow{k\To\infty}G_\infty.
\end{equation}
The restriction to the set (\ref{eq:setforallcdfs}) allows us to use the dominated convergence theorem along with Lipschitz continuity of $P$ to conclude that
\begin{equation*}
 T^{2\vartheta(k)+1}G\xrightarrow{k\To\infty}TG_\infty.
\end{equation*}
Since these functions are continuous and monotone, and restricted to (\ref{eq:setforallcdfs}), the transforms $\widehat{T^{2\vartheta(k)}G}$ also converge uniformly over compact sets, i.e,
\begin{equation*}
 \widehat{T^{2\vartheta(k)}G}\xrightarrow{k\To\infty}\widehat{G_\infty}.
\end{equation*}
The uniform tail behavior from Lemma~\ref{lem:hatuniformcontrol} makes this convergence uniform over all $\Real$. This gives
\begin{align*}
 \inf \widehat{G_\infty} &=\limk \inf \widehat{T^{2\vartheta(k)}G} =m^*,\\
 \sup \widehat{G_\infty} &=\limk \sup \widehat{T^{2\vartheta(k)}G} =M^*.
\end{align*}

Continuing the same arguments with further iterations of the $T$ map on the sequence $T^{2\vartheta(k)}G$, we get
\begin{align*}
 \inf \widehat{T^2G_\infty }&=m^*,\\
 \sup \widehat{T^2G_\infty }&=M^*.
\end{align*}
Lemma~\ref{lem:Tmapstrict} allows this only if $\widehat{G_\infty}$ is constant, say $\gamma$ ($=m^*=M^*$). Note that $\gamma$ depends only on the starting distribution $G$ (and via $G$ on $F$). Writing $\widehat{G_\infty}(x)=\gamma,~x\in\Real$, we get
\begin{equation*}
 G_\infty(x) = TG_\infty(x-\gamma)=T^2G_\infty(x),~x\in\Real.
\end{equation*}
Define
\begin{equation*}
 F_d(x)=G_\infty(x+\gamma/2),~x\in\Real.
\end{equation*}
It is easy to see that $F_d$ is a fixed point of the map $T$.

Now suppose that there exists another fixed point of $T$, call it $F_\dagger$. Define the shift function $\mathsf{sh}:\Real\To\Real$ such that
\begin{equation}\label{eq:twofixedptshift}
 F_\dagger(x)=F_d(x-\mathsf{sh}(x)).
\end{equation}
Assuming that $\mathsf{sh}$ is not constant, if we follow the steps of the proof of Lemma~\ref{lem:Tmapstrict}, replacing the relation $F(x)=TF(x-\widehat{F}(x))$ with (\ref{eq:twofixedptshift}), we will find that $\sup\mathsf{sh}<\sup\mathsf{sh},$ a contradiction. The only valid possibility is then to have $\mathsf{sh}(x)=\varphi$, a constant. But then, applying the $T$ map to $F_\dagger(x)=F_d(x-\varphi),~x\in\Real$, we get $F_\dagger(x)=F_d(x+\varphi),~x\in\Real$. Thus, $\varphi=0$, and we have a unique fixed point.

For any $F\in\mathcal{D}$, we have that $\widehat{T^{2k}F}(x)\To\gamma_F$ as $k\To\infty$ for all $x$. By Lipschitz continuity of $T^kF$, it can be verified that the sequence $T^{2k}F(x)$ is Cauchy, and hence convergent. It follows that
\begin{equation*}
 T^{2k}F(x)\xrightarrow{k\to\infty}F_d\parl{x-\frac{\gamma_F}{2}},\qquad  T^{2k+1}F(x)\xrightarrow{k\to\infty}F_d\parl{x+\frac{\gamma_F}{2}},
\end{equation*}
for all $x\in\Real$.
\end{proof}

\section*{Acknowledgments}
I thank Rajesh Sundaresan for helpful discussions and suggestions.
This work was supported by the Department of Science and Technology,
Government of India and by a TCS fellowship grant.

%\makeatletter
%\@ifundefined{printbibliography}{\bibliography{bibliography}}{\printbibliography}
%\makeatother
\bibliographystyle{imsart-number}
\bibliography{1}
\end{document}